\theoremstyle{plain}
\newtheorem{theorem}{Theorem}
\newtheorem*{theorem*}{Theorem}
\newtheorem{proposition}[theorem]{Proposition}
\newtheorem{corollary}[theorem]{Corollary}
\newtheorem{lemma}[theorem]{Lemma}
\newtheorem{assumption}{Assumption}
\theoremstyle{definition}
\newtheorem*{definition*}{Definition}
\theoremstyle{remark}
\newtheorem{remark}[theorem]{Remark}
\newtheorem*{remark*}{Remark}
\crefname{figure}{Fig.}{Figs.}
\crefname{theorem}{Theorem}{Theorems}
\crefname{proposition}{Proposition}{Propositions}
\crefname{lemma}{Lemma}{Lemmata}
\crefname{assumption}{Assumption}{Assumptions}
\newcommand{\Bra}[1]{\left\langle #1 \right\rangle }
\newcommand{\abs}[1]{\left\lvert #1 \right\rvert }
\newcommand{\set}[1]{\left\lbrace #1 \right\rbrace }
\newcommand{\norm}[1]{\left\lVert #1 \right\rVert }
\DeclareMathOperator{\divergence}{div}
\DeclareMathOperator{\mass}{\omega}
\DeclareMathOperator{\pr}{Pr}
\DeclareMathOperator{\supp}{supp}
\DeclareMathOperator{\dd}{d\hspace{-2px}}
\newcommand{\R}{\mathds{R}}
\newcommand{\eps}{\varepsilon}
\title{Heat-content and diffusive leakage from material sets in the low-diffusivity limit\thanks{This work is supported by the Priority Programme SPP 1881 \emph{Turbulent Superstructures} of the German Research Foundation.}
}
\author{
Nathanael Schilling\thanks{Department of Mathematics, Technical University of Munich, Germany, \href{mailto:schillna@ma.tum.de}{schillna@ma.tum.de}}
\and
Daniel Karrasch\thanks{Department of Mathematics, Technical University of Munich, Germany, \href{mailto:karrasch@ma.tum.de}{karrasch@ma.tum.de}}
\and
Oliver Junge\thanks{Department of Mathematics, Technical University of Munich, Germany, \href{mailto:oj@tum.de}{oj@tum.de}}
}
\begin{document}

\maketitle

\begin{abstract}
We generalize leading-order asymptotics of a form of the \emph{heat content
of a submanifold} (van den Berg \& Gilkey 2015) to the setting of time-dependent diffusion
processes in the limit of vanishing diffusivity. Such diffusion
processes arise naturally when advection-diffusion processes are viewed in
Lagrangian coordinates. We prove that as diffusivity $\eps$ goes to zero,
the diffusive transport out of a material set $S$ under the
time-dependent, mass-preserving advection-diffusion equation with initial
condition given by the characteristic function $\mathds{1}_S$, 
is $\sqrt{\eps/\pi}\dd\overline{A}(\partial S) + o(\sqrt{\eps})$.
The surface measure $\dd \overline A$ is that of the so-called
\emph{geometry of mixing}, as introduced in (Karrasch \& Keller 2020).
We apply our result to the characterisation of coherent structures in time-dependent dynamical systems.
\end{abstract}

\emph{MSC:} 35B25, % Partial differential equations: Singular perturbations in context of PDEs
60G07, % Probability theory and stochastic processes: General theory of stochastic processes
58J32, % Global analysis: Boundary value problems on manifolds
58J35 % Global analysis: Heat and other parabolic equation methods for PDEs on manifolds

\section{Motivation}

Consider the advection-diffusion process of a passive scalar $u$ by a
sufficiently regular, possibly time-dependent, volume-preserving vector field $V$ as described by the advection-diffusion equation
\begin{equation} \label{eq:ADE}
\partial_t u = - \divergence(uV) + \eps \Delta u\,,
\end{equation}
and some initial condition $u(0,\cdot) = u_0$.  Let $\Phi_0^t$ denote the
flow map (from time $0$ to time $t$) induced by $V$. For $\eps = 0$, there
is only advection and the time-$t$ solution operator mapping $u(0,\cdot)$
to $u(t,\cdot)$ under \cref{eq:ADE} is given by a coordinate change by the
flow map of $V$, i.e., $u(t,\Phi_0^t(x)) = u_0(x)$.
The coordinates induced by the flow map $\Phi_0^t$ are well known as \emph{Lagrangian}
coordinates. We refer to flow-invariant space-time sets as \emph{material sets}. 

For any non-negative $\eps$, we are interested in the leakage of $u$ from a full-dimensional material
set $S$ with smooth boundary over the time interval $[0,t]$ under \cref{eq:ADE}. Let us denote this
\emph{material outflow} by
\[
T_0^t(S,u_0,\eps)\coloneqq\int_S u_0\,\dd x-\int_{\Phi_0^t(S)}u(t,x)\,\dd x\,.
\]
In the advection-only case, flow-invariance directly implies
\begin{equation}\label{eq:flowinvariance}
T_0^t(S,u_0,0) = 0\,,
\end{equation}
regardless of the initial condition $u_0$ and set $S$. In simple terms, no mass can leak
out of a material set if there is no diffusion. 

For $\eps > 0$, however, the situation is different: In general, $T_0^t(S,u_0,\eps)$ does
not vanish, and the asymptotics of $T_1^t(S,u_0,\eps)$ as $\eps \to 0$ are nontrivial and
of both scientific and practical interest. In \cite{Haller2018}, leading-order
asymptotics of $T_0^t(S,u_0,\eps)$ for smooth $u_0$ compactly supported in the interior
of the domain were derived, and in \cite{Karrasch2020a} they were additionally studied
from a geometric point of view.
In this work, we further expand the theory towards the natural case $u_0 = \mathds{1}_S$.

Let $\tilde u$ denote $u$ in Lagrangian coordinates, i.e., $\tilde u(t,\cdot) = u(t,\cdot)\circ \Phi_0^t$. Then \cref{eq:ADE} reads as
\begin{equation}\label{eq:LADE}
\partial_t \tilde u = \eps \Delta_t \tilde u\,,
\end{equation}
where $\Delta_t$ is the differential geometrical pullback of the Laplace
operator by $\Phi_0^t$; see, for instance, \cite{Press1981,Thiffeault2003,Karrasch2020c}.
With a common, slight abuse of notation, we will omit the tilde in \cref{eq:LADE}
henceforth as we work in Lagrangian coordinates exclusively. Here,
\[
T_0^t(S,\mathds{1}_S, \eps) = \int_S \dd x - \int_S u(t,x)\, \dd x = \int_{M \setminus S} u(t,x) \,\dd x\,,
\]
since $\int_{M} u(t,x) \,\dd x = \int_M u_0(x) \,\dd x = \int_S \dd x$ for all $t\in[0,1]$ by mass preservation.
If \cref{eq:LADE} were the classical autonomous heat equation, then the leading-order coefficient (of order $\sqrt \eps$) in $T_0^1(S, \mathds{1}_S, \eps)$ is proportional to the surface area of $\partial S$; see \cite{Vandenberg2015}.
For a generalization to the nonautonomous case as in \cref{eq:LADE}, it is
\emph{a priori} unclear whether one should again expect some kind of surface
measure of $\partial S$ in the leading-order coefficient: in the Lagrangian
pullback geometry, $\partial S$ has---in general---a different surface area
at each time instance $t$. Recently, \cite{Karrasch2020c} proposed a
(weighted) geometry---the \emph{geometry of mixing} to be recalled below---which
was developed to specifically analyze advection-diffusion processes on
finite-time intervals. This geometry, which has the mathematical structure
of a weighted (Riemannian) manifold \cite{Karrasch2020c,Karrasch2020a}, admits
an area form $\dd \overline{A}$ about which we show in this work that it,
indeed, determines the leading-order asymptotics of material leakage out of
material sets, namely
\[
T_0^1(S,\mathds{1}_S, \eps) = \sqrt{\frac{\eps}{\pi}} \int_{\partial S} \dd \overline A + o\left(\sqrt{\eps}\right)\,.
\]
In our proof, we will work with a generalized form of the time-dependent
Lagrangian heat equation \cref{eq:LADE}, and do not assume that it is
necessarily given as some advection-diffusion equation in Lagrangian
coordinates.

\section{Mathematical setting}

Let $M$ be a smooth compact manifold (possibly with smooth boundary), and
$\mass$ a non-vanishing volume form on $M$. Recall that $\mass$ naturally
defines a divergence operator, acting on vector fields $V \in \Gamma(TM)$, by
$\left(\divergence_{\mass} V\right) \mass = \mathcal L_{V} \mass$. Here,
$\Gamma(TM)$ denotes smooth sections of the tangent bundle and $\mathcal L$
is the Lie derivative. If $(g_t)_{t\in[0,1]}$ is a smoothly-varying
one-parameter family of Riemannian metrics on $M$, a \emph{weighted} Laplace
operator (cf.~\cite{Grigoryan2009}), acting on smooth functions $f \in C^\infty(M)$, is defined for each
$t$ with the formula 
\[
\Delta_t f \coloneqq  \divergence_{\mass} g_t^{-1} \dd f\,.
\]
The notation $g_t^{-1}$ shall be interpreted using the well-known natural identification
of $g_t$ with a vector bundle morphism mapping a tangent vector $v$ to the
cotangent vector $g_t(v,\cdot)$. As $g_t$ is positive definite at each point,
this is in fact a vector bundle \emph{isomorphism} and $g_t^{-1}$ is
well-defined. Indeed, $f \mapsto g_t^{-1} \dd f$ is the \emph{gradient}
induced by the metric $g_t$.

As mentioned earlier, our object of study is the time-dependent heat equation
with diffusivity $\eps > 0$ and initial value $u_0 \in L^2(M,\mass)$,
\begin{equation}\label{eq:lade}
\partial_t u = \eps \Delta_t u\,,\qquad u(0,\cdot) = u_0\,,
\end{equation}
which is a generalization of the classical heat equation on $M$ for which $g_t$ is independent from $t$ and $\mass$ is the Riemannian volume form. 
We will look at \cref{eq:lade} with boundary conditions given by either (i)  $\partial M = \varnothing$, (ii) homogeneous Dirichlet boundary or (iii) homogeneous Neumann boundary. Of course, (i) is a special case of both (ii) and (iii).

\section{The geometry of mixing}

We write $P_t^\eps$ for the the time-$t$ solution operator of \cref{eq:lade},
and denote by $\Bra{\cdot,\cdot}_0$ the $L^2(M,\mass)$ inner product.
Throughout, we will identify the volume form $\mass$ with its induced measure.
A \emph{(time) averaged} version of \cref{eq:lade} describes the
leading-order behaviour of $P_1^\eps$ as $\eps \to 0$.
Indeed, defining\footnote{
The imposed boundary condition type in the definition of the semigroup $\exp(\eps t\overline \Delta)$ corresponds to the one (homogeneous Dirichlet/Neumann) imposed in \cref{eq:lade}, see also \cite{Karrasch2020a}.}
\[
\overline g \coloneqq \left(\int_0^1 g_t^{-1} \dd t \right)^{-1}\,, \quad \overline \Delta \coloneqq  \divergence_{\mass} \overline g^{-1} \dd f\,,\quad\textnormal{and}\quad \overline P_t^\eps \coloneqq \exp(\eps t \overline \Delta)\,,
\]
it is true \cite{Karrasch2020a}, see also \cite{Krol1991,Haller2018}, that for
$u_0 \in C^\infty_c(M)$, i.e., $u_0 \in C^\infty(M)$ with compact support in
$\mathring{M}$,
\begin{equation}\label{eq:averaging1}
\norm{P_1^\eps u_0 - \overline P_1^\eps u_0 }_{L^\infty(M)} = O(\eps^2),\quad \eps \to 0\,.
\end{equation}
The operator $\overline \Delta$ was called the \emph{dynamic Laplacian} in \cite{Froyland2015a},  
and is the natural Laplace operator of the weighted manifold
$(M,\overline g, \mass)$. This weighted manifold was coined \emph{geometry of
mixing} in \cite{Karrasch2020c}. On the surface $\partial S$ oriented by
the $\overline g$-unit outer normal vector field $\nu$, the geometry of
mixing has a natural area form given by $\dd \overline A(\cdot)\coloneqq \mass(\nu,\cdot)$; see \cite{Karrasch2020c,Karrasch2020a}.

For non-smooth $u_0$, such as $u_0 = \mathds{1}_S$, it is not clear whether we can expect a result
like \cref{eq:averaging1}: even in the special case of the
time-\emph{independent} heat equation on the weighted manifold $(M,\overline g,\mass)$ , there are now terms of order $\eps^\frac{1}{2}$ and its powers (which is in stark contrast to the case of smooth initial values). It is known, for example, that 
\begin{equation}\label{eq:simplercase}
\Bra{\overline P^\eps_1 \mathds{1}_S, \mathds{1}_{M \setminus S}} = \sqrt{\frac{\eps}{\pi}}\int_{\partial S} \dd \overline{A} + o(\sqrt{\eps})\,,\quad \eps \to 0\,;
\end{equation}
see \cite{Vandenberg2015,Schilling2020}. In this paper, we show that
\cref{eq:simplercase} remains true if $\overline P^\eps_1$ is replaced by
$P^\eps_1$, i.e.\ the following theorem which we prove in \cref{sec:sec6}.

\begin{theorem}
\label{thm:stochastictheorem}
Let $S$ be a compact, full-dimensional submanifold of $M$ with smooth boundary, contained in the interior of $M$, and let $S^c \coloneqq M \setminus M$. Then
\begin{equation}\label{eq:toprovesasection}
\Bra{P_1^\eps \mathds{1}_S, \mathds{1}_{S^c}}_0  = \sqrt{\frac{\eps}{\pi}}\int_{\partial S} \dd\overline A + o(\sqrt \eps)\,, \quad\eps\to 0\,.
\end{equation}
\end{theorem}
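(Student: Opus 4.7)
The plan is to reduce \cref{thm:stochastictheorem} to the autonomous heat-content asymptotic \cref{eq:simplercase} by showing that replacing $P_1^\eps$ with the averaged semigroup $\overline P_1^\eps$ incurs only $o(\sqrt\eps)$ error in $\Bra{P_1^\eps \mathds{1}_S, \mathds{1}_{S^c}}_0$. The averaging estimate \cref{eq:averaging1} cannot be invoked directly, since $\mathds{1}_S\notin C^\infty_c(\mathring M)$; the strategy is therefore to sandwich $\mathds{1}_S$ between smooth approximants and balance the two resulting error sources against one another.

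For each $\delta>0$ I would fix cut-offs $\phi_\delta^-,\phi_\delta^+\in C^\infty_c(\mathring M)$ with $0\le\phi_\delta^-\le\mathds{1}_S\le\phi_\delta^+\le 1$, each agreeing with $\mathds{1}_S$ outside a tubular $\delta$-neighbourhood of $\partial S$, and satisfying $\norm{\phi_\delta^\pm-\mathds{1}_S}_{L^1(\mass)}=O(\delta)$ while a fixed Sobolev norm $\norm{\phi_\delta^\pm}_\ast$ grows like $O(\delta^{-\alpha})$ for some fixed exponent $\alpha$ depending only on $M$ and $\partial S$. Since \cref{eq:lade} generates a mass-preserving, order-preserving Markov semigroup, the pointwise sandwich $P_1^\eps\phi_\delta^-\le P_1^\eps\mathds{1}_S\le P_1^\eps\phi_\delta^+$ holds, and likewise for $\overline P_1^\eps$. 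Writing $E^\eps\coloneqq P_1^\eps-\overline P_1^\eps$ and decomposing
\[
\Bra{E^\eps\mathds{1}_S,\mathds{1}_{S^c}}_0 = \Bra{E^\eps\phi_\delta^\pm,\mathds{1}_{S^c}}_0 + \Bra{E^\eps(\mathds{1}_S-\phi_\delta^\pm),\mathds{1}_{S^c}}_0,
\]
the second summand is bounded by $2\norm{\mathds{1}_S-\phi_\delta^\pm}_{L^1}=O(\delta)$ by $L^1$-contractivity of both semigroups, and the first is controlled by \cref{eq:averaging1} applied to the smooth datum $\phi_\delta^\pm$, yielding $O(\eps^2\delta^{-\alpha})$ once the constant there is tracked as a polynomial in $\norm{\phi_\delta^\pm}_\ast$ (as should be extractable from the proof in \cite{Karrasch2020a}). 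Combining with \cref{eq:simplercase} and choosing $\delta=\eps^\beta$ with $1/2<\beta<3/(2\alpha)$ drives both errors to $o(\sqrt\eps)$; such a window is non-empty whenever $\alpha<3$.

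The main obstacle I foresee is quantifying the dependence of the averaging constant in \cref{eq:averaging1} on the regularity of the initial datum: if the proof in \cite{Karrasch2020a} requires the $C^k$-norm of $u_0$ for too large a $k$, the effective $\alpha$ exceeds $3$ and the simple sandwich collapses. As a fallback, one can work directly with the Duhamel identity
\[
P_1^\eps-\overline P_1^\eps = -\eps\int_0^1 U(s,1)\,(\Delta_s-\overline\Delta)\,\overline P_s^\eps\,\dd s,
\]
where $U(s,1)$ denotes the time-$s$-to-time-$1$ propagator of \cref{eq:lade}. An integration by parts in $s$ using $\int_0^1(\Delta_s-\overline\Delta)\,\dd s=0$ provides an extra factor of $\eps$, after which the parabolic smoothing estimate $\norm{\nabla^k\overline P_s^\eps f}_{L^p}\lesssim(\eps s)^{-k/2}\norm{f}_{L^p}$, applied to the mollified datum $\phi_\delta^\pm$ and with the time integral split at $s\sim\eps$ to handle the singularity at $s=0$, should yield the required $o(\sqrt\eps)$ bound once balanced against the $O(\delta)$ mollification error.
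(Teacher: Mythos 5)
Your approach is genuinely different from the paper's, and it contains a gap that you yourself flag but which I think is fatal to the sandwich as stated. The averaging estimate \cref{eq:averaging1} rests on the second-order expansion (\cref{thm:approximation}) $P_t^\eps u_0 = u_0 + \eps\int_0^t\Delta_\tau u_0\,\dd\tau + O(\eps^2)$, and the $O(\eps^2)$ remainder in any such expansion of $\exp$-type is controlled by \emph{two} applications of the generator, i.e.\ by fourth-order derivatives of $u_0$. For a mollified indicator $\phi_\delta^\pm$ that differs from $\mathds{1}_S$ only in a tube of width $\delta$, $\norm{\nabla^4\phi_\delta^\pm}_{L^\infty}\sim\delta^{-4}$, so the relevant exponent is $\alpha=4$. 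Your own balancing condition $1/2<\beta<3/(2\alpha)$ then gives the empty window $1/2<\beta<3/8$; put differently, the two error sources $\delta$ and $\eps^2\delta^{-4}$ are never simultaneously $o(\sqrt\eps)$. No clever choice of cutoff improves $\alpha$ below $4$ for a function agreeing with $\mathds{1}_S$ outside a width-$\delta$ tube, so the sandwich collapses exactly in the way you anticipate. Your Duhamel fallback is pointed in the right direction but is left at the level of a sketch, and the delicate split of the time integral near $s=0$ (where the singularity $(\eps s)^{-k/2}$ must be traded against the mollification error) is precisely where the hard work would be; you have not shown this closes.

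The paper takes an entirely different route that sidesteps the loss of derivatives. Writing the solution operator via the Kolmogorov backward equation, one gets $\Bra{P_1^\eps\mathds{1}_{S^c},h}_0 = E_h[\mathds{1}_{S^c}(X_1^\eps)]$ for the SDE \cref{eq:sdemain}, and similarly for the averaged process $\overline X_t^\eps$. Each process is then approximated in mean square, with error $O(\eps^2)$ uniformly in $t$ (a special case of Blagoveshchenskii's theorem), by a Gaussian process with coefficients frozen at the initial point. The crucial observation (\cref{distributionlemma}) is that the two frozen-coefficient processes $Y_1^\eps$ and $\overline Y_1^\eps$ have \emph{identical laws}: both are $X_0+\sqrt\eps\,\overline\sigma(X_0)W_1$, because freezing the coefficient and integrating $\sigma(1-t,X_0)\sigma(1-t,X_0)^\top$ over $t\in[0,1]$ reproduces $\overline a(X_0)$. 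This is the averaging mechanism, realised at the SDE level, without ever differentiating the initial datum. The remaining step, \cref{lemma:asymptoticvalidity}, upgrades the $L^2$ estimate $E|X_1^\eps-Y_1^\eps|^2\leq K\eps^2$ to the $o(\sqrt\eps)$ bound on $|E\mathds{1}_R(X_1^\eps)-E\mathds{1}_R(Y_1^\eps)|$ by a Markov-inequality argument combined with the $O(\delta)$ control on $\pr(X_1^\eps\in(\partial S)_\delta)$, which follows from $L^1$-contractivity. In effect the paper mollifies the \emph{target set} rather than the initial datum, and it is the $L^2$-to-indicator upgrade (Markov inequality plus boundary-layer bound) that replaces your attempted Sobolev-norm tracking. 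You would do well to compare your Duhamel sketch against this: the role your parabolic smoothing estimate plays is played there by Blagoveshchenskii's stochastic estimate, which costs only two moments rather than four derivatives.
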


\section{Eulerian coherent pairs and Lagrangian coherent sets}

In \cite{Froyland2014}, the following concept of \emph{coherence} has been introduced.
Consider two spatial sets $S$ (at time 0) and $S'$ (at time 1), $L_\eps$ a small
perturbation of the \emph{transfer operator}, i.e., the solution operator for
\cref{eq:ADE} with $\eps=0$, where the perturbation strength scales with
$\eps>0$. Then \cite{Froyland2014} proposed a \emph{coherence ratio}
\begin{equation}\label{eq:coherence}
    \rho_\eps(S,S') \coloneqq \frac{\Bra{L_\eps\mathds{1}_S, \mathds{1}_{S'}}_0}{\mass(S) } + \frac{\Bra{L_\eps\mathds{1}_{M\setminus S}, \mathds{1}_{M \setminus S'}}_0}{\mass(M \setminus S')}\,,
\end{equation}
as a measure of \emph{coherence} of the pair $(S,S')$. Verbally, this measures how much
of $S$ is carried to $S'$ and how much of $M\setminus S$ is carried to $M\setminus S'$ by
the ``perturbed flow''. In yet other words, \emph{coherent pairs} (see also \cite{Banisch2017}) $S$ and $S'$ are pairs
of spatial sets such that there is little leakage under the action of $L_\eps$.

Of course, choosing $S' = \Phi_0^1(S)$ results in no leakage (or, equivalently, coherence
ratio equal to 1) in the non-diffusive case, notably for any choice of $S$, see
\cref{eq:flowinvariance}. While, in that limit case, the problem of seeking ``maximally
coherent pairs'' becomes meaningless, one would expect that $S' = \Phi_0^1(S)$ is the
right condition to perturb from when bringing weak diffusion into consideration.

We thus define the \emph{Lagrangian coherence ratio} as 
\[
\tilde \rho_\eps(S) \coloneqq \rho_\eps(S,\Phi_0^1(S))\,.
\]
Seemingly trivial, this has conceptually deep implications. First, it removes one degree
of freedom, the choice of $S'$. As a consequence, it changes the focus from Eulerian
coherent pairs (of sets) to individual Lagrangian coherent sets. Moreover, it is clear
that, for given $S$, the Lagrangian coherence ratio depends only on the type and the
strength of the perturbation of the transfer operator. One implementation of a
perturbation, as done in \cite{Froyland2014}, is to convolve densities both before and
after the purely advective transport with an explicitly-defined kernel, whose support is
bounded by $\eps$ away from 0.
Another popular approach is to omit any explicit perturbation, and rely on ``numerical
diffusion'' (e.g., via box discretizations) instead; see \cite{Froyland2010a}.
The choice $L_\eps = P^\eps_1$, i.e., the solution operator to the
Lagrangian advection-diffusion equation \cref{eq:LADE} was suggested in
\cite{Karrasch2020c}---see \cite{Denner2016} for the analogous Eulerian approach---as a
physically natural perturbation candidate, that can also be given a stochastic
interpretation. With this definition of $L_\eps$, we can work with indicator
functions directly when maximizing coherence measures like \cref{eq:coherence}, instead
of applying a two-step relaxation procedure as is sometimes done; see \cite{Huisinga2006,Froyland2014,Denner2017}.

By \cref{thm:stochastictheorem}, if $\partial S$ is smooth and $\partial M = \emptyset$ (or with homogeneous Neumann boundary) , then as $\eps \rightarrow 0$,
\begin{align*}
\rho_\eps(S,\Phi_0^1(S)) = \frac{\mass(S) - \sqrt{\frac{\eps}{\pi}}\int_{\partial S} \dd \overline A }{\mass(S)} + \frac{\mass(M \setminus S) - \sqrt{\frac{\eps}{\pi}} \int_{\partial S} \dd \overline A}{\mass(M \setminus S)}  + o(\sqrt \eps)\,.
\end{align*}
In other words, if we fix $\omega(S)$, the coherence ratio depends in leading order as
$\eps \rightarrow 0$ only on (a constant times) the area of $\partial S$ in the
geometry of mixing. Smooth local minimizers of the area functional in a (weighted)
manifold with respect to volume-preserving variations are well-known to be surfaces of
constant generalized mean curvature; see \cite[Sect.~9.4E]{Gromov2003}. The above
considerations hence suggest that sets bounded by such a minimizing surface be viewed as
\emph{Lagrangian coherent sets} in the low-diffusivity limit. This connection between the
concept of coherent sets and that of the (generalized) isoperimetric problem is closely
related to the connection described in \cite{Froyland2015a}. At the same time, it has
close ties to the studies of diffusive transport across material surfaces performed in
\cite{Karrasch2020c,Haller2018,Haller2020}.

\section{Proof of the main theorem}
\label{sec:sec6}

\subsection{Overview}

Our proof consists of a reduction of \cref{eq:toprovesasection} to the
time-independent setting so that we can apply \cref{eq:simplercase}.
In a first step, we perform this reduction for the case $M=\R^n$
in \cref{sec:step1} using stochastic methods. This avoids technical
complications arising from dealing with manifold-valued stochastic processes.
We then treat the general case where $M$ is an
arbitrary compact manifold in a second step
(\cref{sec:reductionstep}).

The structure of the first step is sketched in \cref{fig:proof_structure}.  
On the top right hand side we depict the averaged, i.e., time-independent,
advection-diffusion equation for which we know (\cref{eq:simplercase})
the asymptotic behaviour of $\Bra{P_1^\eps \mathds{1}_S, \mathds{1}_{S^c}}_0$
as $\eps \to 0$. On the top left there is the \emph{time-dependent}
advection-diffusion equation which is the subject of \cref{thm:stochastictheorem}.
Each arrow represents a reduction or approximation step in the proof:
\begin{enumerate}[(i)]
\item 
The upper two arrows (blue) connect a stochastic differential equation (SDE) to its
Kolmogorov backward PDE above it; see \cref{sec:kbe}.
\item Central arrows (olive): Each SDE is approximated by another SDE, inheriting the
leading-order asymptotics we are interested in, see \cref{sec:stochana}.
The use of this kind of SDE approximation to obtain PDE approximations is well known in the literature, see, e.g., \cite[Section 2.3]{Freidlin2012}.
\item The lower arrow (black) highlights the fact that $Y_t^\eps$ and $\overline Y_t^\eps$ have the same law, and, as a consequence, they share the same the leading-order asymptotics of interest.
\end{enumerate}
The reduction as a whole may be conceptualised as going along the arrows from the top right of \cref{fig:proof_structure} to the top left.

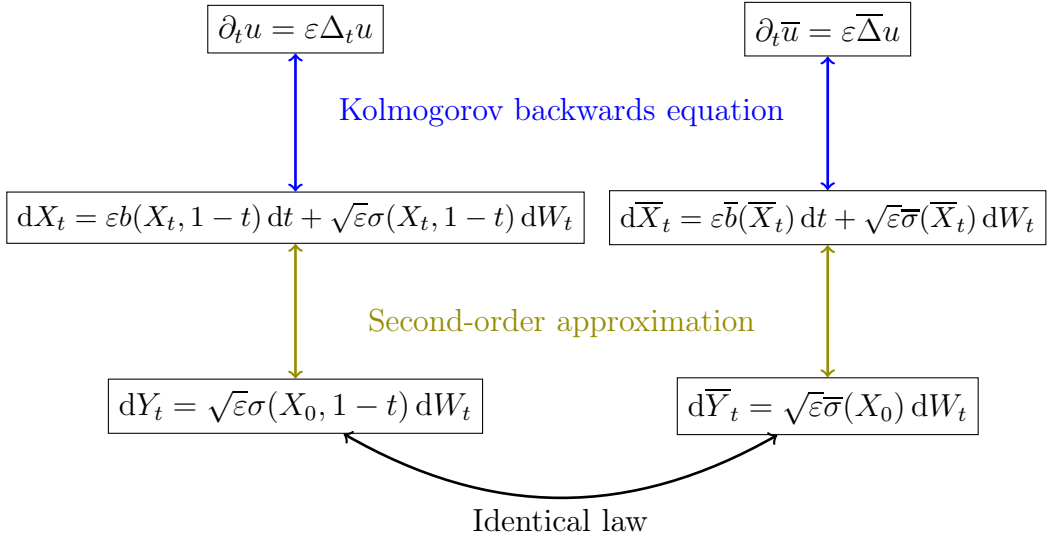
\begin{figure}\begin{center}
\begin{tikzpicture}[node distance=0.15\textwidth]
\node[draw] (A) at (0,0) {
$\partial_t u = \eps \Delta_t u$};

\node[draw] (B) at (7.0,0.0) {
$\partial_t \overline u = \eps \overline \Delta u $};

\node[draw] (C) at (0.0,-2.5) {\small
$\dd X_t = \eps b(X_t,1-t)\dd t + \sqrt \eps \sigma(X_t,1-t)\dd W_t $};

\node[draw] (D) at (0.0,-5.0) {
$\dd Y_t = \sqrt \eps \sigma(X_0,1-t)\dd W_t $};

\node[text width=0.5\textwidth,text centered,color=blue] (G) at (3.5,-1.1) {
Kolmogorov backwards equation};

\node[text width=0.5\textwidth,text centered,color=olive] (G) at (3.5,-3.9) {
Second-order approximation};

\node[draw] (E) at (7.0,-2.5) {\small
$\dd \overline X_t = \eps \overline b(\overline X_t)\dd t + \sqrt{\eps}\overline\sigma(\overline X_t)\dd W_t $};

\node[draw] (F) at (7.0,-5.0) {
$\dd \overline Y_t = \sqrt \eps \overline\sigma(X_0)\dd W_t $};

\draw [<->, color=blue, line width=1pt] (A) edge (C);
\draw [<->, color=olive, line width=1pt] (C) edge (D);

\draw [<->, color=blue, line width=1pt] (B) edge (E);
\draw [<->, color=olive, line width=1pt] (E) edge (F);

\draw [<->, color=black, line width=1pt] (D) edge[bend right] node [below] {Identical law} (F);
\end{tikzpicture}
\end{center}
\caption{Schematic visualization of the structure of the proof of \cref{thm:stochastictheorem} on $\R^n$.}
\label{fig:proof_structure}
\end{figure}

\subsubsection*{Technical issues caused by non-compactness}

As $\R^n$ is not compact, it may be that $\mathds{1}_{\R^n} \notin L^2(\R^n,\mass)$.
This means that the $\Bra{\cdot,\cdot}_0$ notation
appearing in \cref{eq:toprovesasection} must be clarified:
we abuse notation by writing $\Bra{f,g}_0 \coloneqq \int_M f(x) g(x)\mass$ whenever $fg \in L^1(\R^n,\mass)$.
Similar issues also play a role in the non-compact case. Since in
\cref{thm:stochastictheorem} we assume the set $S$ to be compact anyway, in
order to avoid unncessary technical complications, we state the following simplifiying

\begin{assumption}\label{assumption}
There exists a bounded set $B$ (containing $S$ in its interior) so that both $g_t$ and $\mass$ are equal to the Euclidean metric and its volume form respectively outside of $B$ for all $t \in [0,1]$.
\end{assumption}

\subsection{Step 1: The case \texorpdfstring{$M=\R^n$}{M=ℝ\^n}}
\label{sec:step1}

On $M=\R^n$, the initial value problem \cref{eq:lade} takes the form
\begin{equation}\label{ladecoords}
\partial_t u_\eps = \eps\left(\sum_{i=1}^n b_i \partial_i u_\eps + \frac12 \sum_{i,j=1}^n a_{i,j} \partial_{ij} u_\eps \right), \qquad u_\eps(0,\cdot) = u_0\,.
\end{equation}
Here, the space-time-dependent, real-valued functions $a_{ij}$ and $b_i$ depend on the metrics $(g_t)_{t\in[0,1]} $ and the volume form $\mass$.
There are no coefficients of lower order because $\Delta_t \mathds{1}_{\R^n} = 0$ for all $t \in [0,1]$. \Cref{assumption}
yields that on the complement of $B$, $a_{i,j} = \delta_{ij}$ and $b_i = 0$ in Cartesian coordinates.
We have collected some results from the literature on parabolic PDEs in \cref{sec:parabolic} adapted to our setting which we will use in the sequel.

\subsubsection{The Kolmogorov backwards equation}
\label{sec:kbe}

The time-$1$ solution operator of \cref{ladecoords}
is closely linked to the  stochastic process governed by the SDE
\begin{equation}\label{eq:sdemain}
\dd X_t^\eps = \eps b(1-t,X_t^\eps)\dd t + \sqrt{\eps} \sigma(1-t,X_t^\eps)\, \dd W_t\,,
\end{equation}
with $\left(\sigma(t,x)\sigma^\top(t,x)\right)_{ij} = a_{ij}(t,x)$ and initial value $X_{t_0}^\eps = X_{t_0}$. 
It is well known that for a given $n$-dimensional Brownian motion $\left(W_t\right)_{t \in [0,1]}$,
for $t_0 \in [0,1]$ a unique strong solution to \cref{eq:sdemain}, starting at time $t_0$, exists provided that $X_{t_0}$ is independent of $(W_t)_{t\in[t_0,1]}$ and that $b$ and $\sigma$ satisfy Lipschitz and growth conditions (cf.~also \cref{sec:stochana}).
A direct consequence of smoothness and \cref{assumption} is that the Lipschitz and growth conditions are satisfied, as it is well known that $\sigma$ may be chosen to be smooth. To explicitly include the dependence of the process $(X_t^\eps)_{t\in[t_0,1]}$ on the random variable $X_{t_0}$,  we will write $E_{t_0,x}[\cdot]$ (given $x \in \R^n$) for the expected value under the assumption that $X_{t_0} = x$ almost surely; in this case $X_0$ has law given by the Dirac delta measure centered at $x$.
The \emph{Kolmogorov backwards-equation} associated to \cref{eq:sdemain} is a partial differential
equation (PDE) for the function
\[
w_\eps(t,x) = E_{t,x}[u_0(X_1^\eps)]\,,
\]
provided that $u_0$ is sufficiently smooth, see \cite[Theorem 6.1]{Friedman1975}.
This PDE reads
\[
\partial_t w_\eps(t,x) = -\eps\left(\sum_{i=1}^n b_i(1-t,x)\partial_i w_\eps(t,x) + \frac12 \sum_{i,j=1}^n a_{ij}(1-t,x) \partial_{ij} w_\eps(t,x) \right),
\]
and moreover $w_\eps(t,x) \to u_0(x)$  as $t \to 1$. Thus,
$u_\eps(t,x) = w_\eps(1-t,x)$, as both sides satisfy \cref{ladecoords} and
solutions to parabolic PDEs are unique. As a consequence,
\begin{equation}\label{eq:stochkoopman}
(P^\eps_1 u_0)(x) = u_\eps(1,x) = w_\eps(0,x) = E_{0,x}[u_0(X_1^\eps)]\,.
\end{equation}
This equation provides a probabilistic interpretation of the time-$1$ solution
operator of \cref{ladecoords} in terms of the SDE defined by \cref{eq:sdemain}. 
\subsubsection{Probabilistic interpretation of the heat content in a manifold}
In \cref{eq:stochkoopman} we assume the process $X_t$ to start at the
constant $x$ almost surely, i.e., we choose the initial value $X_0$ to have
law equal to the point measure at $x$.  We may, however, also treat the case
in which the initial value $X_0$ of \cref{eq:sdemain} is no longer a constant
random variable.

Let $h\colon M \to \R_{\geq 0}$ be a measurable function so that $h\mass$ is a probability measure.
We denote by $E_{h}[\cdot]$ the expected value in a probability 
space where $X_0$ has law $h \mass$ independent of the Brownian motion $(W_t)_{t \in [0,1]}$.
One may verify that\footnote{To see this, we first observe that the Markov property of SDEs \cite[Thm.~9.2.3]{Arnold1974} yields a time-$1$ transition function $p_\eps$ satisfying $p_\eps(x,A) = E_{x,0}[\mathds{1}_A(X_1)]$
for $x \in \R^n$ and measurable $A \subset \R^n$.   The definition of the inner product $\Bra{\cdot,\cdot}_0$ yields
\[
\Bra{x \mapsto E_{0,x}[\mathds{1}_A(X_1))], h}_0 = \int_{\R^n} p_\eps(\cdot,A)h(\cdot)\mass =  
E_{h}[\mathds{1}_A(X_1)].
\]}
\[
\Bra{x \mapsto E_{0,x}[u_0(X_1))], h}_0 = E_{h}[u_0(X_1)]
\]
holds in the case $u_0 = \mathds{1}_A$, the extension to all $u_0 \in L^\infty(\R^n)$
follows from linearity and monotone convergence.
Using \cref{eq:stochkoopman} and \cref{remark:boundedConv}, it follows that
\begin{equation}\label{eq:fubinitricka}
\Bra{P^\eps u_0, h }_0 = \Bra{x\mapsto E_{x, 0}[u_0(X_1)],h }_0 =  E_{h}[u_0(X_1)]\,.
\end{equation}
We summarize that \cref{eq:fubinitricka} proves a \emph{probabilistic}
interpretation of inner products of the form $\Bra{P^\eps_1 u_0, h}_0$
provided that (i) $u_0 \in L^\infty(\R^n)$, and (ii) $h \in L^1(\R^n,\mass)$ is nonnegative.
The inner product appearing in \cref{eq:toprovesasection} is not of the form
just discussed as $\mathds{1}_{S^c}$ is not in general in $L^1(\R^n, \mass)$. 
Observe, however, that for compact $S$, 
\[
\Bra{P^\eps \mathds{1}_S, \mathds{1}_{S^c}}_0 = \Bra{\mathds{1}_S, P^\eps \mathds{1}_{S^c}}_0\,,
\]
which is proven in \cref{lemma:kindofselfadjoint} in \cref{sec:misc}.
As a consequence of this and \cref{eq:fubinitricka},
the left hand side of \cref{eq:toprovesasection} may be re-written as
\begin{equation}\label{eq:toprovesasection2}
\Bra{P^\eps \mathds{1}_S, \mathds{1}_{S^c}}_0 = E_{\mathds{1}_S}[\mathds{1}_{S^c}(X^\eps_1))]\,,
\end{equation}
provided that $\mass(S) = 1$, which can be assumed without loss of generality.
\subsubsection{Probabilistic interpretation of heat content in the averaged setting}
The steps above correspond to the left blue arrow in \cref{fig:proof_structure}. The right blue arrow corresponds to repeating the same construction for the \emph{averaged} equation $\partial_t \overline u = \eps \overline \Delta \overline u$.
Here, the PDE of $\overline u$ is given in coordinates by
\[
\partial_t \overline u_\eps = \eps\left(\sum_{i=1}^n \overline b_i \partial_i \overline u_\eps + \frac12\sum_{i,j=1}^n \overline a_{i,j}\partial_{ij}\overline u_\eps\right),
\]
with $\overline b_i(x) = \int_0^1 b(t,x)\dd t$ and $\overline a_{i,j}(x) = \int_0^1 a_{i,j}(t,x)\dd t$. The associated stochastic process is defined by the SDE
\begin{equation}\label{eq:SDE_overline}
\dd \overline X_t^\eps = \eps \overline b(\overline X_t^\eps)\dd t + \sqrt \eps \overline \sigma(\overline X_t^\eps)\dd W_t\,,
\end{equation}
with $\overline \sigma \overline \sigma^\top = \overline a$. 
Given initial value $\overline X_0^\eps = X_0$, we see that analogously to \cref{eq:fubinitricka}, 
\[
\Bra{\overline P^\eps u_0 , h}_0 = E_{h}[u_0(\overline X_1^\eps)]
\]
holds when $u_0 \in L^\infty(M,\mass)$ and $h\mass$ is a probability measure.
Our aim is now to show that
\begin{equation}\label{eq:aimkol}
E_{\mathds1_S}[\mathds{1}_{S^c}(X_1)] =
E_{\mathds1_S}[\mathds{1}_{S^c}(\overline X_1)] + o(\sqrt{\eps})\,,\quad \eps \to 0\,,
\end{equation}
corresponding to $h=\mathds{1}_S$. In fact, we generalize to positive
\[
h \in C^\infty_S(M) \coloneqq \set{f\mathds{1}_S;~f \in C^\infty(M)},
\]
so that $h\mass$ is a probability measure, and will look at the quantity $E_{h}[\mathds{1}_{S^c}(X_1^\eps)] = \Bra{P^\eps \mathds{1}_{S^c}, h}_0$ with the aim of showing
\begin{equation}\label{eq:aimkol3}
E_{h}[\mathds{1}_{S^c}(X_1)] =
E_{h}[\mathds{1}_{S^c}(\overline X_1)] + o(\sqrt{\eps})\,,\quad\eps \to 0\,.
\end{equation}
Writing $h = f\mathds{1}_S$, we know from \cite{Vandenberg2015,Schilling2020}, that 
\begin{equation}\label{eq:finalstep1}
E_{h}[\mathds{1}_{S^c}(\overline X_1)] = \Bra{\overline P^\eps \mathds{1}_{S^c}, f \mathds{1}_S}_0 = \sqrt\frac{ \eps}{ \pi} \int_{\partial S} f \dd \overline A + o(\sqrt \eps)\,,\quad\eps \to 0\,,
\end{equation}
which yields the asymptotic behaviour of the right hand side of \cref{eq:aimkol3}.
Our aim in the next steps will be to prove \cref{eq:aimkol3}.

\subsubsection{Approximation of stochastic processes}
\label{sec:stochana}

We continue with the middle (green) arrows in \cref{fig:proof_structure},
starting with the left one.
Here, we will construct a family of stochastic processes $(Y_t^\eps)_{t \in [0,1]}$ so that
\begin{equation}\label{eq:approxstoch}
E\left[\abs{X_t^\eps - Y_t^\eps}^2\right] \leq K\eps^2\,
\end{equation}
for some $K > 0$ and all $t\in[0,1]$ for sufficiently small $\eps$.
In light of the arguments around \cref{eq:aimkol3}, we will use this approximation to show that:
\begin{proposition}\label{prop:approximationprop}
If $(Y_t^\eps)_{t\in[0,1]}$ satisfies \cref{eq:approxstoch}, then for $h \in C^\infty_S(M)$, 
\begin{align}\label{eq:aimkol2}
E_{h}[\mathds{1}_{S^c}(X_1^\eps)]  = 
E_{h}[\mathds{1}_{S^c}(Y_1^\eps)] + o(\sqrt \eps).
\end{align}
Analogously, corresponding to the the right hand side of \cref{fig:proof_structure}, if the family of processes $(\overline Y_t^\eps)_{t\in[0,1]}$ satisfies an inequality like \cref{eq:approxstoch} but with $\overline X_t^\eps$ in place of $X_t^\eps$, then
\begin{align}\label{eq:aimkol4}
E_{h}[\mathds{1}_{S^c}(\overline X_1^\eps)]  = 
E_{h}[\mathds{1}_{S^c}(\overline Y_1^\eps)] + o(\sqrt \eps)\,.
\end{align}
\end{proposition}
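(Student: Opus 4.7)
The starting point is the observation that the integrand $\mathds{1}_{S^c}(X_1^\eps) - \mathds{1}_{S^c}(Y_1^\eps)$ vanishes unless $X_1^\eps$ and $Y_1^\eps$ lie on opposite sides of the smooth hypersurface $\partial S \subset \R^n$. On that event, the straight segment from $X_1^\eps$ to $Y_1^\eps$ must cross $\partial S$, so $X_1^\eps$ lies within Euclidean distance $\abs{X_1^\eps - Y_1^\eps}$ of $\partial S$. Consequently, for any threshold $\delta>0$,
\[
\set{\mathds{1}_{S^c}(X_1^\eps) \neq \mathds{1}_{S^c}(Y_1^\eps)} \subset \set{\abs{X_1^\eps - Y_1^\eps} > \delta} \cup \set{X_1^\eps \in N_\delta(\partial S)},
\]
where $N_\delta(\partial S)$ denotes the open $\delta$-tube around $\partial S$. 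Taking the $E_h$-expectation yields
\[
\abs{E_h[\mathds{1}_{S^c}(X_1^\eps) - \mathds{1}_{S^c}(Y_1^\eps)]} \leq P_h\!\bigl(\abs{X_1^\eps - Y_1^\eps} > \delta\bigr) + P_h\!\bigl(X_1^\eps \in N_\delta(\partial S)\bigr).
\]

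The first probability is immediately handled by Markov's inequality together with the hypothesis \cref{eq:approxstoch}: $P_h(\abs{X_1^\eps - Y_1^\eps} > \delta) \leq K\eps^2/\delta^2$. For the second, I would use \cref{eq:fubinitricka} to rewrite the probability as the inner product $\Bra{P_1^\eps \mathds{1}_{N_\delta(\partial S)}, h}_0$ and then estimate
\[
\Bra{P_1^\eps \mathds{1}_{N_\delta(\partial S)}, h}_0 \leq \norm{h}_\infty \int_M P_1^\eps \mathds{1}_{N_\delta(\partial S)}\,\mass \leq \norm{h}_\infty\, \mass\!\bigl(N_\delta(\partial S)\bigr).
\]
The last step uses that $\int_M P_1^\eps f\,\mass \leq \int_M f\,\mass$ for $f\geq 0$, which follows from $\int_M \Delta_t f\,\mass = 0$ in the closed-manifold or Neumann case, and from the maximum principle in the Dirichlet case. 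Finally, smoothness and compactness of $\partial S$ give $\mass(N_\delta(\partial S)) \leq C\delta$ for small $\delta$ by the tubular neighbourhood theorem.

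Combining these bounds gives
\[
\abs{E_h[\mathds{1}_{S^c}(X_1^\eps) - \mathds{1}_{S^c}(Y_1^\eps)]} \leq C\norm{h}_\infty \delta + K\eps^2/\delta^2,
\]
and the right-hand side is minimised at $\delta \propto \eps^{2/3}$, where both terms are of order $\eps^{2/3}$. Since $\eps^{2/3}/\sqrt\eps = \eps^{1/6} \to 0$, this establishes \cref{eq:aimkol2}. The proof of \cref{eq:aimkol4} is obtained verbatim after replacing $P_1^\eps, X_1^\eps, Y_1^\eps$ by $\overline P_1^\eps, \overline X_1^\eps, \overline Y_1^\eps$, noting that $\overline P_1^\eps$ is mass-preserving (resp.\ contractive) for exactly the same reason as $P_1^\eps$.

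The main technical point I anticipate is justifying the $L^\infty$-to-$L^1$ duality estimate $\Bra{P_1^\eps f, h}_0 \leq \norm{h}_\infty \int_M f\,\mass$, which rests on the $L^1(\mass)$-contractivity of the semigroup and is where the boundary-condition type for \cref{eq:lade} enters the argument. Beyond this, the proof is a straightforward coupling: Markov's inequality handles the coupling error, while the smoothness of $\partial S$ controls the boundary layer. The crucial gain from $O(\sqrt\eps)$ to $o(\sqrt\eps)$ arises precisely from the mismatch between the sharp $\eps^2$ variance bound on $X_1^\eps - Y_1^\eps$ and the $\delta$-linear scaling of $\mass(N_\delta(\partial S))$: the optimal balance $\delta \sim \eps^{2/3}$ sits strictly below the heat-content scale $\sqrt\eps$.
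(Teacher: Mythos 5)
Your proposal is correct and follows essentially the same route as the paper's own proof: a coupling decomposition over a boundary tube of width $\delta$, Markov's inequality from the $L^2$-hypothesis for the first term, Hölder plus mass preservation (bounding $\mass(N_\delta(\partial S)) \lesssim \delta$) for the second, and then optimizing $\delta$ against $\eps$ to land strictly below $\sqrt\eps$. The only cosmetic difference is the choice $\delta\sim\eps^{2/3}$ rather than the paper's $\delta=\eps^{0.6}$ (any exponent in $(1/2,3/4)$ works), and that the paper abstracts the tube/Markov argument into a standalone lemma (\cref{lemma:asymptoticvalidity}) before verifying the tube estimate exactly as you do.
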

The processes $Y_1$ and $\overline Y_1$ will have the same law (this is the bottom arrow in \cref{fig:proof_structure}), after we have proven this we may conclude that
$E_{h}[\mathds{1}_{S^c}(Y_1^\eps)] = 
E_{h}[\mathds{1}_{S^c}(\overline Y_1^\eps)]$, which yields \cref{eq:aimkol3},
which together with \cref{eq:aimkol2,eq:aimkol4} shows (here $h = \mathds{1}_S f$), that
\[
E_{h}[\mathds{1}_{S^c}(X_1^\eps)] 
 = E_{h}[\mathds{1}_{S^c}(\overline X_1^\eps)] + o(\sqrt\eps) \\
 = \sqrt\frac{\eps}{\pi} \int_{\partial S} f \,\dd \overline A + o(\sqrt\eps)\,.
\]

Before proving \cref{prop:approximationprop}, we will state a lemma needed in the proof. 
Let $A$ be a (Borel) measurable subset of $\R^n$. We denote by
$d(x,A)=\inf_{a\in A} \abs{x-a}$ the Euclidean distance between a point
$x\in\R^n$ and the set $A$.  Let further $A_\delta \coloneqq \set{x \in \R^n; ~d(x,\partial A) \leq \delta}$ be the $\delta$-neighborhood of the boundary of $A$.

\begin{lemma}\label{lemma:asymptoticvalidity}
Let $(\Omega,\mathcal A,\mathcal \pr)$ be some probability space and let $E[X]$ denote the expectation of some random variable $X$ on $\Omega$ with respect to $\pr$. For $\eps \in [0,1]$, let $A^\eps$ and $B^\eps$ be $(\R^n,\mathcal B)$-valued random variables with $E[\abs{A^\eps - B^\eps}^2] \leq C_0^2\eps^2$ for some $C_0 > 0$.  
Let $R\in\mathcal B$ and assume that $\mathcal P(A^\eps \in R_\delta) \leq C_1 \delta$ for sufficiently small $\delta > 0$ and some $C_1>0$.  Then,
\[
\abs{E[\mathds{1}_R (A^\eps)] - E[\mathds{1}_R(B^\eps)]} = o(\sqrt \eps)\,\quad \eps \to 0.
\]
\end{lemma}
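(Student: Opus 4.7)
The plan is to estimate the disagreement $|\mathds{1}_R(A^\eps) - \mathds{1}_R(B^\eps)|$ pointwise and then split the resulting expectation into a boundary-closeness contribution (controlled by the hypothesis on $\pr(A^\eps\in R_\delta)$) and a large-deviations contribution (controlled by Markov's inequality via the $L^2$ hypothesis), tuning an auxiliary scale $\delta=\delta(\eps)$ so that both pieces are $o(\sqrt{\eps})$.

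The key geometric observation is that, for any Borel set $R\subset\R^n$, the open ball $B(x,\delta)$ cannot cross $\partial R$ once $d(x,\partial R)>\delta$. Hence if $A^\eps\notin R_\delta$ and $|A^\eps-B^\eps|<\delta$, then $A^\eps$ and $B^\eps$ lie in the same member of the partition $\mathring R,\,\mathring{R^c}$, so $\mathds{1}_R(A^\eps)=\mathds{1}_R(B^\eps)$. Taking contrapositives,
\[
\{\mathds{1}_R(A^\eps)\neq \mathds{1}_R(B^\eps)\}\;\subseteq\;\{A^\eps\in R_\delta\}\;\cup\;\{|A^\eps-B^\eps|\geq\delta\}.
\]
First I would therefore bound
\[
\bigl|E[\mathds{1}_R(A^\eps)]-E[\mathds{1}_R(B^\eps)]\bigr|\leq \pr\bigl(A^\eps\in R_\delta\bigr)+\pr\bigl(|A^\eps-B^\eps|\geq\delta\bigr).
\]
Next, the hypothesis $\pr(A^\eps\in R_\delta)\leq C_1\delta$ handles the first term directly, and Chebyshev's inequality together with $E[|A^\eps-B^\eps|^2]\leq C_0^2\eps^2$ yields
\[
\pr\bigl(|A^\eps-B^\eps|\geq \delta\bigr)\leq \frac{E[|A^\eps-B^\eps|^2]}{\delta^2}\leq \frac{C_0^2\eps^2}{\delta^2}.
\]

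The remaining step is to choose $\delta=\delta(\eps)$ so that both $\delta$ and $\eps^2/\delta^2$ are $o(\sqrt{\eps})$; one needs $\eps^{3/4}\ll \delta\ll \eps^{1/2}$, and for concreteness taking $\delta=\eps^{2/3}$ gives $C_1\delta+C_0^2\eps^2/\delta^2=O(\eps^{2/3})=o(\sqrt{\eps})$, which concludes the argument. The only mildly delicate point—and what I would call out explicitly in the write-up—is the geometric observation that leads to the two-event cover: it requires no regularity on $R$ beyond measurability, because the trichotomy $\R^n=\mathring R\sqcup\partial R\sqcup\mathring{R^c}$ plus connectedness of open balls suffices. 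Everything else is Chebyshev plus an elementary optimization of exponents, so no genuine obstacle is expected.
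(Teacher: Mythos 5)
Your proof is correct and follows essentially the same route as the paper's: cover the disagreement event $\{\mathds{1}_R(A^\eps)\neq\mathds{1}_R(B^\eps)\}$ by $\{A^\eps\in R_\delta\}\cup\{|A^\eps-B^\eps|\geq\delta\}$, apply the hypothesis and Markov/Chebyshev, then tune $\delta(\eps)$ so both terms are $o(\sqrt\eps)$. The paper chooses $\delta=\eps^{0.6}$ and picks up an extra factor of $2$ by splitting the disagreement event into two one-sided pieces first, but these are cosmetic differences; the argument is the same.
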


\begin{proof}
The proof is given in \cref{sec:misc}, and is essentially an application of the Markov inequality.
\end{proof}
\begin{proof}[Proof of \cref{prop:approximationprop}]
We will apply \cref{lemma:asymptoticvalidity} twice with $R = S^c$.
For the first application, with $A^\eps = X_1^\eps$ (corresponding to \cref{eq:aimkol2}), we will need to check that $\pr[X_1 \in (S^c)_\delta] \leq C_1\delta$ for some constant $C_1 > 0$. To see this is indeed the case, observe that $(S^c)_\delta = S^\delta$, and furthermore
$\pr(X_1^\eps \in S^\delta) = E[\mathds{1}_{S_\delta}(X_1^\eps)]$. In the case that $X_0$ has law $f \mathds{1}_S\mass$, this is equal to $E_{f\mathds{1}_S }[\mathds{1}_{S_\delta}(X_1^\eps)]$. Thus, if $X_0$ has law $f \mathds{1}_S\mass$ we have
\begin{align*}
\mathcal P(X_1 \in (S^c)_\delta) 
& = \Bra{P^\eps \mathds{1}_{S_\delta}, f}_0 \\
& \leq \norm{P^\eps \mathds{1}_{S_\delta}}_{L^1(M,\mass)} \norm{f}_\infty 
  \tag{by Hölder's inequality}\\
& \leq \mass(S_\delta) \norm{f}_\infty 
  \tag{by mass preservation of $P^\eps$} \\
& \leq C_1\delta \,,
\end{align*}
for some $C_1 > 0$, proving the claim. The proof required for the second application
(with $A = \overline X_1$, i.e., \cref{eq:aimkol4}) that $\mathcal P(\overline X_1^\eps \in S^\delta) = O(\delta)$ proceeds along the same lines.
\end{proof}
\subsubsection{Approximation by a Gaussian Process}
\label{sec:l2approximations}

We now construct the processes required by \cref{prop:approximationprop} satisfying \cref{eq:approxstoch}. To this end, let $(Y_t^\eps)_{t\in[0,1]}$ be defined by 
\[
\dd Y_t^\eps = \sqrt{\eps}\sigma(1-t,X_0) \, \dd W_t\,,\qquad Y_0^\eps = X_0\,,
\]
where $X_0$ is independent of the Wiener process and bounded. Likewise, let $(\overline Y_t^\eps)_{t\in[0,1]}$ be defined by 
\[
\dd \overline Y_t^\eps  = \sqrt \eps \overline \sigma (X_0)\dd W_t\,, \qquad \overline Y_0^\eps = X_0\,.
\]

\begin{proposition}[\cite{Blagoveshchenskii1961}]
Let $(X_t)_{t\in[0,1]}$ be the stochastic process satisfying \cref{eq:sdemain}. The process $(Y_t^\eps)_{t\in[0,1]}$ approximates $(X_t)_{t\in[0,1]}$ in the sense that
\[
E\left[\abs{X_t^\eps - Y_t^\eps}^2\right] \leq K \eps^2\,, \qquad\text{for all } t\in [0,1].
\]
Similarly, let $(\overline X_t)_{t\in[0,1]}$ by the solution of \cref{eq:SDE_overline}, then $(\overline Y_t^\eps)_{t\in[0,1]}$ approximates $(\overline X_t)_{t\in[0,1]}$ in the sense that
\[
E\left[\abs{\overline X_t^\eps - \overline Y_t^\eps}^2\right] \leq K \eps^2\,, 
\qquad\text{for all } t\in [0,1]\,.
\]
In both cases, $K > 0$ is a constant independent of $\eps$ and $t$.
\end{proposition}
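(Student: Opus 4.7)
The plan is to derive the bound by a direct moment estimate on the It\^o integral representation of $X_t^\eps - Y_t^\eps$, exploiting the fact that the drift scales with $\eps$ and the diffusion with $\sqrt\eps$. First I would subtract the two SDEs to obtain
\[
X_t^\eps - Y_t^\eps = \eps\int_0^t b(1-s,X_s^\eps)\,\dd s + \sqrt\eps\int_0^t\bigl[\sigma(1-s,X_s^\eps)-\sigma(1-s,X_0)\bigr]\dd W_s,
\]
and then apply the elementary inequality $(a+b)^2\le 2a^2+2b^2$ together with the Cauchy--Schwarz inequality on the Bochner integral and the It\^o isometry on the stochastic integral. This produces
\[
E\bigl[|X_t^\eps - Y_t^\eps|^2\bigr] \leq 2\eps^2 t\int_0^t E\bigl[|b(1-s,X_s^\eps)|^2\bigr]\dd s + 2\eps\int_0^t E\bigl[|\sigma(1-s,X_s^\eps)-\sigma(1-s,X_0)|^2\bigr]\dd s.
\]

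Under \cref{assumption}, the coefficients $b$ and $\sigma$ are smooth and equal to the Euclidean data outside the bounded set $B$, so $b$ is bounded on $[0,1]\times\R^n$ and $\sigma$ is globally Lipschitz in $x$ (uniformly in $t$). The first term is therefore already bounded by $C_1\eps^2$. For the second term, the uniform Lipschitz property of $\sigma$ gives
\[
E\bigl[|\sigma(1-s,X_s^\eps)-\sigma(1-s,X_0)|^2\bigr] \leq L^2\, E\bigl[|X_s^\eps - X_0|^2\bigr].
\]
The next step is an auxiliary a priori estimate $E[|X_s^\eps-X_0|^2] = O(\eps)$ uniformly in $s\in[0,1]$, obtained by the same splitting applied to the SDE for $X_s^\eps - X_0$: the drift contributes $O(\eps^2)$ and the diffusion, by It\^o isometry plus boundedness of $\sigma$, contributes $O(\eps)$. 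Feeding this back yields an overall bound of the form $C_1\eps^2 + 2\eps\cdot L^2\cdot O(\eps) = K\eps^2$, as required.

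The argument for $\overline X_t^\eps - \overline Y_t^\eps$ is line-for-line identical: the drift $\overline b$ is bounded and $\overline\sigma$ is Lipschitz (since both are time-averages of smooth, compactly supported perturbations of Euclidean data), and the same splitting yields the same $O(\eps^2)$ bound. No subtler moment bounds, Burkholder--Davis--Gundy inequalities, or Gronwall loops are needed because the Lipschitz constant does not multiply the target quantity on the right-hand side; only the one-step estimate $E[|X_s^\eps-X_0|^2]=O(\eps)$ is required.

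The only place deserving care is verifying that the growth and Lipschitz hypotheses hold globally on $\R^n$ so that the It\^o isometry is applicable to the difference $\sigma(1-s,X_s^\eps)-\sigma(1-s,X_0)$ without integrability worries; this, however, is immediate from \cref{assumption} since the stochastic integrand is bounded. I do not anticipate a genuine obstacle: the proof is essentially the classical Blagove\v{s}\v{c}enskii calculation, and its real content lies in keeping track of the $\eps$-scaling of the two terms, which is what promotes what would otherwise be an $O(\eps)$ estimate into the required $O(\eps^2)$ bound.
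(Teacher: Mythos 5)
Your proof is correct, but it takes a genuinely different (and more elementary) route than the paper. The paper follows Blagove\v{s}\v{c}enskii's original argument under linear-growth and Lipschitz hypotheses only: it rescales the error as $\gamma(t,\eps)=(X_t^\eps-Y_t^\eps)/\eps$, controls $\psi(t,\eps)=E\bigl[\sup_{0\le s\le t}\abs{\gamma(s,\eps)}^2\bigr]$, writes $X_s^\eps-X_0=(Y_s^\eps-X_0)+\eps\gamma(s,\eps)$ so that the It\^o-isometry term feeds back into $\psi$, and then needs Doob's maximal inequality plus Gr\"onwall to close the loop; this yields the stronger bound with the supremum inside the expectation. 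You instead exploit that under \cref{assumption} the coefficients $b$ and $\sigma$ are globally bounded and $\sigma$ is globally Lipschitz (they are smooth and Euclidean outside the bounded set $B$), which lets you bound $E[\abs{X_s^\eps-X_0}^2]=O(\eps)$ directly from the SDE for $X_s^\eps-X_0$ and then insert it into the Lipschitz/It\^o-isometry estimate, avoiding Gr\"onwall and Doob entirely. What you lose is the $\sup_t$ version proved in the appendix and the applicability under mere linear growth (i.e.\ the generality of the cited result); what you gain is a shorter, self-contained two-step estimate. Since the proposition as stated, and its use via \cref{lemma:asymptoticvalidity} with $A^\eps=X_1^\eps$, $B^\eps=Y_1^\eps$, only require the pointwise-in-$t$ bound, your argument fully suffices in the setting of \cref{assumption}; the identical treatment of $\overline X^\eps,\overline Y^\eps$ is likewise fine because $\overline b,\overline\sigma$ inherit boundedness and Lipschitz continuity from their time-averaged definitions.
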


\begin{proof}
This is a special case of the result in \cite{Blagoveshchenskii1961}.
We have adapted the proof of this special case in \cref{sec:stochapprox}.
\end{proof}

The processes $(Y_t^\eps)_{t\in[0,1]}$ and $(\overline Y_t^\eps)_{t\in[0,1]}$ may be thought of as being second-order approximations to the processes $(X_t)_{t\in[0,1]}$ and $(\overline X_t)_{t\in[0,1]}$ respectively.
With \cref{prop:approximationprop}, we conclude that

\begin{proposition}\label{prop:approx}
With $X_1,Y_1,\overline X_1, \overline Y_1$ as defined above and $h \in C^\infty_S(M)$, 
\begin{align*}
\abs{E_{h}[\mathds{1}_{S^c}(X_1)] - E_{h}[\mathds{1}_{S^c}(Y_1)]} &= o(\sqrt \eps)\,,\\
\abs{E_{h} [\mathds{1}_{S^c}(\overline X_1)] - E_{h}[\mathds{1}_{S^c}(\overline Y_1)]} &= o(\sqrt \eps)\,.
\end{align*}
\end{proposition}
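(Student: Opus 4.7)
The plan is to deduce \cref{prop:approx} as an immediate combination of \cref{prop:approximationprop} with the preceding Blagoveshchenskii-style $L^2$-approximation. The processes $(Y_t^\eps)_{t\in[0,1]}$ and $(\overline Y_t^\eps)_{t\in[0,1]}$ have been engineered precisely so that the latter supplies the $L^2$-hypothesis \cref{eq:approxstoch}, while \cref{prop:approximationprop} converts this $L^2$-closeness into closeness of the expectations $E_h[\mathds{1}_{S^c}(\cdot)]$; no further probabilistic work is required.

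Concretely, I would fix $h = f\mathds{1}_S \in C^\infty_S(M)$ with $h\mass$ a probability measure and take an $\R^n$-valued initial condition $X_0$ of law $h\mass$, independent of the driving Wiener process. Since $\supp h \subseteq S$ and $S$ is compact, $X_0$ is bounded almost surely, so the preceding Blagoveshchenskii theorem is applicable (with expectation taken jointly over $X_0$ and $(W_t)_{t\in[0,1]}$) and yields
\[
E_h\bigl[\abs{X_1^\eps - Y_1^\eps}^2\bigr] \leq K\eps^2, \qquad E_h\bigl[\abs{\overline X_1^\eps - \overline Y_1^\eps}^2\bigr] \leq K\eps^2.
\]
Inserting these two bounds into \cref{prop:approximationprop} with $R = S^c$---the remaining hypothesis $\pr(X_1^\eps \in S^\delta) = O(\delta)$ having already been verified in its proof, and the analogous estimate for $\overline X_1^\eps$ following by identical reasoning applied to $\overline P^\eps$ in place of $P^\eps$---directly delivers both of the asserted $o(\sqrt \eps)$ statements.

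There is no genuine obstacle at this stage; the proposition is a packaging step rather than an independent argument. The one point worth flagging is that the constant $K$ furnished by Blagoveshchenskii's estimate does not deteriorate when the starting value is randomised from a Dirac mass at $x \in S$ to the distribution $h\mass$: the underlying Itô-calculus proof produces a bound that is uniform in the initial point on any compact set, and under \cref{assumption} together with smoothness the coefficients $b$, $\sigma$ and their derivatives are uniformly bounded on $\R^n$, so integrating the pointwise bound against $h\mass$ preserves it verbatim.
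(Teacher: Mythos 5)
Your proposal matches the paper's (implicit) argument exactly: \cref{prop:approx} is nothing more than the combination of \cref{prop:approximationprop} with the Blagoveshchenskii-type approximation result of \cref{sec:l2approximations}, and the paper presents it precisely as such (``With \cref{prop:approximationprop}, we conclude that\ldots''). One small remark: your closing paragraph, which worries about the constant $K$ surviving the passage from a Dirac mass at $x\in S$ to the distribution $h\mass$, addresses a non-issue. The theorem in \cref{sec:stochapprox} is stated and proved directly for $X_0\in L^2(\mathcal P)$, with the expectation already taken over the joint law of $(X_0, W)$; no pointwise-in-$x$ bound is produced and then integrated. All that is needed is $X_0\in L^2$, which holds because $\supp h\subseteq S$ and $S$ is compact (as you correctly observe earlier in your proposal).
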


While these second-order approximations may differ pointwise, their laws are the same, this is the black arrow in \cref{fig:proof_structure} and the subject of the following lemma.

\begin{lemma} \label{distributionlemma}
The random variables $Y_1^\eps - X_0$ and $\overline Y_1^\eps- X_0$ have the same law, namely that of $\sqrt{\eps}\overline \sigma(X_0)W_1$.
\end{lemma}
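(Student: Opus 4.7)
The strategy is to compute the two laws directly as (mixtures of) Gaussians and observe they agree. Since $X_0$ is assumed independent of the driving Brownian motion $(W_t)_{t\in[0,1]}$, it suffices to show equality of the two \emph{conditional} laws given $X_0$; the unconditional laws then agree by integrating out $X_0$.

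First I would integrate the defining SDEs. Because neither integrand depends on the process being defined, both Itô integrals are well-defined and
\[
Y_1^\eps - X_0 = \sqrt{\eps}\int_0^1 \sigma(1-t,X_0)\,\dd W_t,\qquad \overline Y_1^\eps - X_0 = \sqrt{\eps}\,\overline\sigma(X_0)\, W_1.
\]
Condition on $X_0 = x$. Since $X_0$ is independent of $W$, the conditional law of each quantity is obtained by simply replacing $X_0$ with the constant $x$ in the integrands. In both cases we are then integrating a \emph{deterministic}, square-integrable matrix-valued function of $t$ against $W$, so by the standard fact that Wiener integrals of deterministic integrands are centered Gaussian, both conditional laws are centered Gaussians on $\R^n$.

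To conclude it remains to match covariances. For $Y_1^\eps - X_0$ the Itô isometry gives
\[
\operatorname{Cov}\bigl(Y_1^\eps - X_0 \mid X_0 = x\bigr) = \eps\int_0^1 \sigma(1-t,x)\sigma(1-t,x)^\top\,\dd t = \eps\int_0^1 a(s,x)\,\dd s = \eps\,\overline a(x),
\]
after the substitution $s = 1-t$ and by the definition of $\overline a$. For $\overline Y_1^\eps - X_0$ a direct computation gives
\[
\operatorname{Cov}\bigl(\overline Y_1^\eps - X_0 \mid X_0 = x\bigr) = \eps\,\overline\sigma(x)\overline\sigma(x)^\top = \eps\,\overline a(x),
\]
by the defining relation $\overline\sigma\,\overline\sigma^\top = \overline a$. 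The two conditional laws are centered Gaussians with identical covariance, hence identical; integrating against the law of $X_0$ yields equality of the unconditional laws, and both coincide with the law of $\sqrt{\eps}\,\overline\sigma(X_0)W_1$.

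The only potentially subtle point is the use of independence of $X_0$ and $W$ to justify treating $X_0$ as a constant inside the Wiener integrals when conditioning; this is standard but worth flagging, and is the one place where the hypothesis that $X_0$ is independent of $(W_t)$ (implicit in the setup of \cref{sec:l2approximations}) is essential.
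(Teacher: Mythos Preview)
Your proof is correct and follows essentially the same approach as the paper: both first handle the case $X_0 = x$ deterministic, identify each variable as centered Gaussian with covariance $\eps\,\overline a(x)$, and then pass to general $X_0$. The only cosmetic difference is in that last step---the paper phrases it via an augmented state space and the Markov property, whereas you condition on $X_0$ and invoke independence of $X_0$ and $W$ directly; your route is arguably the cleaner of the two.
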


\begin{proof}
Recall that $\overline a(x) \coloneqq \int_0^1 a(t,x) \dd t$, $\overline b(x) \coloneqq \int_0^1 b(t,x) \dd t$, and $\overline \sigma \overline \sigma^\top = \overline a$.
As $Y_0 = X_0$, we see that $Y_1^\eps - X_0 = \sqrt{\eps}\int_0^1 \sigma(1-t,X_0)\, \dd W_t$.
If $X_0 = x$ then by \cite[Cor.~4.5.6]{Arnold1974}, the random variable
$Y_1^\eps - X_0$ is a normal random variable with zero mean and covariance
matrix $\eps\int_0^1 \sigma(t,x)\sigma(t,x)^\top\,\dd s = \eps\overline a(x)$, which is
(by the same argument) also the law of $\overline Y^\eps - X_0$. The random
variable $\sqrt{\eps} \overline \sigma(X_0) W_1$ is a normal random variable with
the same mean and covariance matrix, proving the claim for constant $X_0$. The
processes $(Y_t^\eps)_{t\in[0,1]}$ and $(\overline Y_t^\eps)_{t\in[0,1]}$ are not
memoryless as the right hand side depends on the \emph{initial} value of the
process. This can be worked around by suitably augmenting the state space, the
claim of the lemma for nonconstant $X_0$ follows by making use of the Markov
property for SDEs in this augmented state space.
\end{proof}

\begin{corollary}\label{cor:distr}
For $h \in C^\infty_S(M)$, one has that
\[
E_{h}\left[\mathds{1}_{S^c}(Y_1^\eps)\right] = E_{h}\left[\mathds{1}_{S^c}(\overline Y_1^\eps)\right].
\]
\end{corollary}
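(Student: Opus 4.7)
My plan is to extract the corollary as an essentially direct consequence of \cref{distributionlemma}. Writing $Y_1^\eps = X_0 + (Y_1^\eps - X_0)$ and $\overline Y_1^\eps = X_0 + (\overline Y_1^\eps - X_0)$, the lemma identifies both increments with the same random variable $\sqrt{\eps}\overline\sigma(X_0) W_1$ in distribution. The key point I want to emphasize is that, because the lemma expresses both increments in the form $F(X_0, W_1)$ for a common deterministic function $F$, the joint distributions $(X_0, Y_1^\eps - X_0)$ and $(X_0, \overline Y_1^\eps - X_0)$ coincide---not merely the marginals of the increments. Equivalently, conditioning on $X_0 = x$ yields a centered Gaussian law with covariance $\eps\overline a(x)$ in both cases, as already recorded in the proof of the lemma.

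From this joint equality it follows at once that $Y_1^\eps$ and $\overline Y_1^\eps$ are equal in law under the probability measure governing $E_h$, and applying the bounded measurable function $\mathds{1}_{S^c}$ and taking expectation delivers the stated identity. I do not anticipate any substantive obstacle: the corollary is a packaging step that records the consequence of \cref{distributionlemma} in precisely the form needed to close the chain of reductions summarized by \cref{fig:proof_structure}. The only point deserving care is the distinction between marginal equality of the increments and joint equality together with $X_0$, and this subtlety is already handled by the explicit representation $\sqrt{\eps}\overline\sigma(X_0) W_1$ furnished by the lemma.
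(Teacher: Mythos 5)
Your proof is correct and takes essentially the same route as the paper, which likewise deduces the corollary directly from \cref{distributionlemma}; your emphasis on needing the joint law of $(X_0,\,\text{increment})$ rather than only the marginal law of the increments is precisely what the lemma's proof supplies via conditioning on $X_0=x$ (the conditional law is centered Gaussian with covariance $\eps\overline a(x)$ in both cases). One small imprecision: $Y_1^\eps-X_0=\sqrt{\eps}\int_0^1\sigma(1-t,X_0)\,\dd W_t$ is not literally a deterministic function of $(X_0,W_1)$ alone, since it depends on the whole Brownian path, but your fallback conditional-Gaussian argument is the correct justification and nothing in the conclusion is affected.
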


\begin{proof}
This is a direct result of \cref{distributionlemma}.
\end{proof}

To summarize the reasoning so far: combining \cref{cor:distr} with \cref{prop:approx} yields for $h \in C^\infty_S(M)$, that
\[
E_{h} [\mathds{1}_{S^c}(X_1^\eps)] = E_{h}[\mathds{1}_{S^c}(\overline X_1^\eps)] + o(\sqrt \eps)\,,\quad\eps\to 0\,.
\]
We know that $\Bra{P^\eps \mathds{1}_{S^c}, h}_0 = E_{h} [\mathds{1}_{S^c}(X_1^\eps)]$. Writing $h = \mathds{1}_S f$,  together with \cref{eq:finalstep1}, we may see that
\begin{equation}\label{eq:finalequationstep1}
\Bra{P^\eps \mathds{1}_{S^c}, h}_0 = \sqrt{\frac{\eps}{\pi}} \int_{\partial S} f \dd \overline A + o(\sqrt \eps)\,,\quad\eps\to 0\,.
\end{equation}
With $f \equiv 1$, applying \cref{lemma:kindofselfadjoint} completes the proof of \cref{thm:stochastictheorem}
on $\R^n$ in the setting of \cref{assumption}.

\subsection{Step 2: Restriction to local data and geometry}
\label{sec:reductionstep}

In this section, we write $P^\eps = P^\eps_1$ and $\overline P^\eps = \overline P^\eps_1$.

\subsubsection{Only local data is asymptotically important}

Let $U$ be a compact, full-dimensional submanifold of $M$ with smooth boundary and with $S \subset \mathring{U}$. The inner product
appearing on the left hand side of \cref{eq:toprovesasection} may be written as
\[
\Bra{P^\eps \mathds{1}_S, \mathds{1}_{S^c}}_0 = \Bra{P^\eps \mathds{1}_S, \mathds{1}_{U\setminus S}}_0 + \Bra{P^\eps \mathds{1}_S, \mathds{1}_{U^c}}.
\]
We start by showing that discarding the second term yields an error of $o(\eps)$.

\begin{lemma}\label{lemma:localisation}
Let either $M$ be compact, or $M = \R^n$ (together with \cref{assumption}) and $S,U$ as above.
Let $f \in L^\infty(M,\mass)$ with $\supp(f) \subset \mathring{U}$. Then
\[
\abs{\Bra{P^\eps f, \mathds{1}_{U^c}}_0} = o(\eps)\,,\quad\eps\to 0\,.
\]
\end{lemma}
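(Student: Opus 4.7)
The plan is to exploit the positive distance $d_0 \coloneqq d(\supp f, U^c) > 0$ between the support of $f$ and the complement of $U$ to show that $\abs{\Bra{P^\eps f, \mathds{1}_{U^c}}_0}$ is super-polynomially small in $\eps$. Intuitively, heat starting in $\supp f$ has to cross a barrier of width $d_0$ in time $1$ under diffusivity $\eps$, and Gaussian-type off-diagonal bounds on the kernel of $P^\eps$ produce a decay of the form $\exp(-c d_0^2/\eps)$, which is $o(\eps^k)$ for every $k \geq 0$ and in particular $o(\eps)$.

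The first step is a reduction to a pointwise heat-kernel estimate. Writing $p_\eps(x,y)$ for the integral kernel of $P^\eps$ with respect to $\mass$, Fubini's theorem (applied to the non-negative integrand $\abs{f(y)}\,p_\eps(x,y)$) gives
\[
\abs{\Bra{P^\eps f, \mathds{1}_{U^c}}_0} \leq \norm{f}_\infty \int_{\supp f}\!\!\int_{U^c} p_\eps(x,y)\,\mass(dx)\,\mass(dy).
\]
Every pair $(x,y)$ in this double integral satisfies $d(x,y)\geq d_0$, so it suffices to prove that
\[
\sup_{y\in \supp f}\int_{\set{x\,:\,d(x,y)\geq d_0}} p_\eps(x,y)\,\mass(dx) = O\!\bigl(\exp(-c d_0^2/\eps)\bigr),\quad\eps\to 0,
\]
for some $c > 0$.

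For the $\R^n$ case under \cref{assumption} I would use the probabilistic representation \cref{eq:stochkoopman} to rewrite the left-hand side as $\pr_{0,y}\bigl(\sup_{t\in[0,1]}\abs{X_t^\eps - y}\geq d_0\bigr)$. Since the drift in \cref{eq:sdemain} is $O(\eps)$ and the diffusion $O(\sqrt\eps)$ with uniformly bounded smooth coefficients, for sufficiently small $\eps$ the drift contribution to $\sup_t\abs{X_t^\eps - y}$ is at most $d_0/2$, so a Bernstein-type exponential martingale inequality applied to the It\^o-integral part yields the required bound $C\exp(-c d_0^2/\eps)$, uniformly in $y\in\supp f$. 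For general compact $M$ the same conclusion follows from classical Aronson/Davies-type Gaussian upper bounds on parabolic heat kernels with smooth bounded coefficients under uniform ellipticity (the distance induced by $(g_t)$ being equivalent to any background Riemannian distance on $M$).

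The main obstacle I anticipate is bookkeeping rather than conceptual: one must check that the ellipticity and coefficient bounds entering the Gaussian estimate are uniform in $t\in[0,1]$ and in $y\in\supp f$. Smoothness of $(g_t)$, compactness of $\supp f$, and \cref{assumption} in the $\R^n$ case together deliver this uniformity, after which the conclusion is immediate since $\exp(-c d_0^2/\eps)$ is asymptotically smaller than any power of $\eps$, a much stronger statement than the claimed $o(\eps)$.
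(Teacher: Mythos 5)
Your route is genuinely different from the paper's, and it aims at a strictly stronger conclusion. The paper argues softly: replace $f$ by a smooth $h\in C_c^\infty(\mathring M)$ with $f\leq h$ and $\supp h\subset\mathring U$, write $\Bra{P^\eps h,\mathds{1}_{U^c}}_0=\Bra{P^\eps h,\mathds{1}_M}_0-\Bra{P^\eps h,\mathds{1}_U}_0$, and insert the already-established expansion $P^\eps h = h+\eps\overline\Delta h+O(\eps^2)$ from \cref{thm:approximation}; the $O(1)$ and $O(\eps)$ contributions vanish because $\supp h\subset\mathring U$ (and, on $\R^n$, by mass preservation plus the divergence theorem), giving $o(\eps)$ with nothing beyond the paper's own toolkit. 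Your approach instead invokes off-diagonal Gaussian heat-kernel estimates or exponential martingale inequalities to get $O(\exp(-c\,d_0^2/\eps))$, which is far stronger but imports machinery (Aronson/Davies bounds, Bernstein-type estimates for time-inhomogeneous diffusions) that the paper deliberately avoids.

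There is, however, one genuine slip in your probabilistic branch for $\R^n$. After Fubini, the quantity to control is $\int_{\set{x:\,d(x,y)\geq d_0}}p_\eps(x,y)\,\mass(dx)$, i.e.\ the kernel is integrated over its \emph{first} argument. Since $p_\eps(x,\cdot)\,\mass$ (not $p_\eps(\cdot,y)\,\mass$) is the law of $X_1^\eps$ started at $x$, this is not $\pr_{0,y}\bigl(X_1^\eps\in\set{x:\,d(x,y)\geq d_0}\bigr)=\int_{\set{x:\,d(x,y)\geq d_0}}p_\eps(y,x)\,\mass(dx)$, and the kernel is not symmetric — the paper includes \cref{lemma:kindofselfadjoint} precisely because $P^\eps$ is not self-adjoint. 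So the running-supremum probability you write down controls the wrong quantity. The fix is routine: either pass to the adjoint (time-reversed) diffusion, whose fundamental solution $p^*$ satisfies $p^*(y,0,x,1)=p(x,1,y,0)$ (see the discussion in \cref{sec:parabolic}) and apply the same exponential-martingale estimate to it, or drop the process entirely and integrate the pointwise Aronson bound $p_\eps(x,y)\leq C\eps^{-n/2}\exp(-c\abs{x-y}^2/\eps)$, which \emph{is} symmetric in $x$ and $y$ even though $p_\eps$ itself is not. Your Aronson/Davies argument for compact $M$ is already of this second kind and does not suffer from the issue.
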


\begin{proof}
Without loss of generality, assume $f \geq 0$. Pick some
$h \in C^\infty_c(\mathring{M})$ with $f \leq h$ and $\supp(h)\subset \mathring{U}$.  We compute:
\begin{align}
0 \leq \Bra{P^\eps f, \mathds{1}_{U^c}}_0 &\leq \Bra{P^\eps h, \mathds{1}_{U^c}}_0 \label{eq:lemmastep}\\
&= \Bra{P^\eps h, \mathds{1}_M}_0 - \Bra{P^\eps h, \mathds{1}_U }_0\,. \nonumber
\end{align}
If $M$ is compact we are already done at \cref{eq:lemmastep}, as by \cref{thm:approximation}, $P^\eps h = h + \eps \overline \Delta h + O(\eps^2)$ and $\mathds{1}_{U^c} \in L^2(M,\mass)$. 
If $M = \R^n$, we observe that $\Bra{P^\eps h, \mathds{1}_M}_0 = \Bra{h, \mathds{1}_M}_0 = \Bra{h, \mathds{1}_U}_0$. Using \cref{thm:approximation},  $\Bra{P^\eps h, \mathds{1}_U}_0 = \Bra{h + \eps \overline \Delta h + o(\eps), \mathds{1}_U}_0$. Given that $h$ is compactly supported in the interior of $U$, the term $\Bra{\eps \overline \Delta h, \mathds{1}_U}_0$ vanishes by the divergence theorem. We conclude that $\Bra{P^\eps h, \mathds{1}_U}_0 = \Bra{h, \mathds{1}_U}_0 + o(\eps)$  which yields the claim.
\end{proof}

\subsubsection{Only local geometry is asymptotically important}

Similarly, only local {geometry} affects the asymptotic behaviour of
$\Bra{P^\eps \mathds{1}_S, \mathds{1}_{S^c}}_0$.

\begin{lemma}
Let $M$ be either compact (with homogeneous Neumann or Dirichlet boundary conditions) or equal to $\R^n$ (together with
\cref{assumption}). Let $U \subset M$ be a compact, full-dimensional
submanifold. Let $f \in L^\infty(M,\mass)$ with $\supp(f) \subset \mathring{U}$.
Let $\tilde P^\eps$ be defined the same way as $P^\eps$ via \cref{eq:lade} but on $U$ with homogeneous Dirichlet boundary, i.e., $\tilde P^\eps_t$ is the time-$t$ solution operator to the time-dependent diffusion problem \cref{eq:lade} on $U$ with Dirichlet boundary, and $\tilde P^\eps \coloneqq \tilde P^\eps_1$. Then,
\[
\norm{(P^\eps - \tilde P^\eps)f }_{L^\infty(U)} = o(\eps)\,,\quad\eps\to 0\,.
\]
\end{lemma}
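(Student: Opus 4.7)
The plan is to leverage the Feynman–Kac picture from \cref{sec:kbe}: for small $\eps$, the SDE \cref{eq:sdemain} underlying $\partial_t u = \eps\Delta_t u$ has drift of order $\eps$ and diffusion of order $\sqrt\eps$, so trajectories typically travel a distance of order $\sqrt\eps$ over unit time. Since $\supp(f)$ is compact and contained in $\mathring U$, the quantity $\delta\coloneqq d(\supp(f),\partial U)>0$ is strictly positive, so heat originating from $f$ should not ``feel'' the Dirichlet boundary of $U$, up to corrections that are exponentially small in $1/\eps$.

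More concretely (treating $M=\R^n$ under \cref{assumption}; the compact case is analogous after passing to the corresponding reflected diffusion, or via the PDE alternative below), let $\tau\coloneqq\inf\{t\in[0,1]\,:\,X_t^\eps\notin U\}$. The killed-process representation of the Dirichlet semigroup reads
\[
\tilde P^\eps f(x)=E_{0,x}\bigl[f(X_1^\eps)\mathds{1}_{\{\tau>1\}}\bigr],\qquad x\in U,
\]
so subtracting from \cref{eq:stochkoopman} and applying the strong Markov property at $\tau$ yields
\[
(P^\eps-\tilde P^\eps)f(x)=E_{0,x}\bigl[\mathds{1}_{\{\tau\leq 1\}}\,E_{\tau,X_\tau^\eps}[f(X_1^\eps)]\bigr].
\]
On the event $\{\tau\leq 1\}$ one has $X_\tau^\eps\in\partial U$, hence $d(X_\tau^\eps,\supp(f))\geq\delta$; the inner expectation is therefore bounded by $\norm{f}_{L^\infty}$ times the probability that $X^\eps$ moves a distance of at least $\delta$ during the remaining time $1-\tau\leq 1$. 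Since $b$ and $\sigma$ are bounded (by smoothness and \cref{assumption}), a standard exponential martingale inequality of Bernstein/Freidlin–Wentzell type gives a uniform bound of the form $C\exp(-c\delta^2/\eps)$ for this probability. Consequently
\[
\norm{(P^\eps-\tilde P^\eps)f}_{L^\infty(U)}\leq C\norm{f}_{L^\infty}\exp\bigl(-c\delta^2/\eps\bigr),
\]
which is far stronger than the required $o(\eps)$.

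The principal obstacle is technical rather than conceptual: making the killed/reflected stochastic representation rigorous across all three boundary settings of the lemma. A purely PDE-based alternative sidesteps this issue and applies uniformly in all cases. Namely, the function $w\coloneqq P^\eps f-\tilde P^\eps f$ satisfies \cref{eq:lade} on $[0,1]\times U$ with zero initial data and Dirichlet values $P^\eps_t f|_{\partial U}$ at each $t\in[0,1]$, so the parabolic maximum principle bounds $\norm{w}_{L^\infty([0,1]\times U)}$ by $\sup_{t\in[0,1],\,y\in\partial U}\abs{P^\eps_t f(y)}$. Standard off-diagonal Aronson-type heat-kernel estimates (or, equivalently, a Varadhan-type short-time asymptotic for $\eps\Delta_t$) then show this supremum is $O(\exp(-c\delta^2/\eps))$ because $d(\supp(f),\partial U)\geq\delta$, yielding the same exponential decay and in particular $o(\eps)$.
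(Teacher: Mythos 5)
Your argument is correct in substance, but it takes a genuinely different route for the decisive quantitative step. The paper also reduces matters to the boundary values via the parabolic maximum principle (your ``PDE alternative'' is exactly its first half), but it then estimates $\norm{P_t^\eps h}_{L^\infty([0,1]\times\partial U)}$ not by heat-kernel decay but by the averaging expansion of \cref{thm:approximation}: since a smooth majorant $h\geq f$ is compactly supported in $\mathring{U}$, the expansion $P_t^\eps h = h+\eps\int_0^t\Delta_s h\,\dd s+O(\eps^2)$ vanishes identically on $\partial U$ up to the $O(\eps^2)$ remainder, which immediately gives the $o(\eps)$ boundary bound in every setting (compact with Dirichlet/Neumann boundary, or $\R^n$ under \cref{assumption}) using only results already established in the paper. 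Your route instead invokes exit-time/large-deviation estimates for the SDE \cref{eq:sdemain}, or Aronson--Varadhan off-diagonal bounds, to get an exponentially small bound $O(e^{-c\delta^2/\eps})$; this is stronger than needed and conceptually illuminating, but it imports machinery the paper does not use, and you would have to verify the $\eps$-uniformity of the Gaussian constants for the weighted, time-dependent operator (after rescaling time by $\eps$) and their availability on a compact manifold with Neumann boundary, as well as justify the killed-process representation of $\tilde P^\eps$ for time-dependent coefficients. Two smaller points to tighten: applying the maximum principle directly to $w=P^\eps f-\tilde P^\eps f$ with $f$ merely in $L^\infty$ is delicate because $w$ need not be continuous up to $t=0$; the paper avoids this by first proving the comparison for positive smooth $h\in C^\infty_c(\mathring U)$ and extending to $L^\infty$ data by a continuity/approximation argument (and by sandwiching $f$ between $0$ and a smooth majorant), and your sign-changing $f$ should likewise be handled either by splitting into positive and negative parts or by running the comparison for $\pm w$ after such a regularization.
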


\begin{proof}
Let $e_\eps(t) = (P^\eps_t - \tilde P^\eps_t) h$ for a generic positive $h \in C^\infty_c(\mathring{U})$. The function $e_\eps(t)$ satisfies the time-dependent heat equation $\partial_t e_\eps = \eps\Delta_t e_\eps$ on $U$, with nonhomogeneous Dirichlet boundary $e_\eps\bigr|_{\partial_U} = P^\eps_t h\bigr|_{\partial U} \geq 0$. By construction (and reasoning like the following is well known in the literature, cf.~\cite{Grigoryan2006}),
$e_\eps(0,\cdot) = 0$, and the weak maximum principle \cite[Thm.~A.3.1]{Jost2011}  applied to $-e_\eps(t)$ yields that on $U$ and $t\in[0,1]$,
\begin{equation}\label{eq:positivity}
(P^\eps_t - \tilde P^\eps_t)h \geq 0\,.
\end{equation}
By a continuity argument, \cref{eq:positivity} extends to positive $h \in L^\infty (M,\mass)$ with $\supp(h) \subset U$, including $f$.
Going back to the case of a smooth $h$, we may pick $h$ so that $h \geq f$, and for this particular choice we get
\[
0 \leq (P^\eps_t - \tilde P_t^\eps)f \leq (P^\eps_t - \tilde P^\eps_t) h = e_\eps(t)\,,
\]
where both inequalities are a consequence of \cref{eq:positivity}.
We conclude for $e_\eps$, once more with the maximum principle, that
\begin{equation}\label{eq:onboundary}
0 \leq \norm{e_\eps}_{L^\infty([0,1] \times U)} \leq \norm{P_t^\eps h}_{L^\infty([0,1] \times \partial U)}\,.
\end{equation}
With \cref{thm:approximation}, we see that $\norm{P_t^\eps h - \tilde h^\eps}_{L^\infty([0,1] \times M)} = o(\eps)$,
where $\tilde h^\eps(t,\cdot) \coloneqq  h + \eps \int_0^t\, \Delta_s h\,\dd s$. In particular, this $L^\infty$ bound holds also on $[0,1] \times \partial U$ as required in \cref{eq:onboundary} (by
construction, $\tilde u^\eps$ vanishes on $\partial U$).  This shows that
$\norm{e_\eps}_{L^\infty([0,1]\times U)} = o(\eps)$, proving the lemma.
\end{proof}

\subsubsection{Remaining steps}

The statement of \cref{thm:stochastictheorem} is thus reduced to one about
\[
\Bra{\mathds{1}_S, P^\eps \mathds{1}_{S^c}}_0\,,
\]
regardless of what manifold $P^\eps$ is defined on, as long as this manifold
is isometric to the original one on a neighborhood of $S$. By taking a smooth
partition of unity $(f_i)_{i=1}^N$ so that $\sum_{i=1}^N f_i = 1$ on $S$ and
each $f_i$ is supported in a single coordinate chart it is (by linearity)
enough to prove that
\[
\Bra{f_i \mathds{1}_S, P^\eps \mathds{1}_{S^c}}_0 = \sqrt{\frac{\eps}{\pi}}\int_{\partial S} f_i \dd\overline A + o(\sqrt \eps)\,,
\]
for each $i=1,\dots,N$. As each $f_i$ is supported in a single coordinate
chart, we may pick a local isometry into $\R^n$ and prove the
expression there. This is precisely the end result of what was proven in step
1, i.e., \cref{eq:finalequationstep1}, so we are done.

\appendix

\section{Approximation of stochastic process}
\label{sec:stochapprox}

Let $(\Omega, \mathcal F, \mathcal P)$ be a probability space supporting a classical Wiener process $W_t\colon\Omega\to\R^n$ for $t\in [0,1]$. Let $b\colon\R^n\times [0,1]\to\R^n$ and $\sigma\colon\R^n\times [0,1]\to \R^{n\times n}$ be measurable functions. Consider the stochastic initial value problem 
\begin{equation}\label{eq:sde}
\dd X_t^\eps = \eps b(X^\eps_t,t)\, \dd t + \sqrt{\eps}\sigma(X^\eps_t,t)\, \dd W_t\,,\qquad X_0^\eps = X_0\,,
\end{equation}
with initial value $X_0\in L^2(\mathcal P)$. If there is $K>0$ such that  
\[
\abs{b(t,X)-b(Y,t)}+\abs{\sigma(t,X)-\sigma(Y,t)} \leq K \abs{X-Y}\,,
\]
and
\[
\abs{b(t,X)}+\abs{\sigma(t,X)} \leq K \sqrt{1+\abs{X}^2}\,,
\]
for all $t\in [0,1]$, then the initial value problem \cref{eq:sde} has a $\mathcal P$-almost surely unique continuous solution \cite[Theorem (6.2.2)]{Arnold1974}.

The following result is a special case of \cite{Blagoveshchenskii1961}, we
follow the proof there and track the dependence of constants involved on
other values more explicitly. The precise value of $C$, however, may change
from line to line.

\begin{theorem} Let $X_t^\eps$ be the unique solution of \cref{eq:sde} and $Y_t^\eps$  the unique solution of the stochastic initial value problem
\[
\dd Y_t^\eps = \sqrt{\eps}\sigma(X_0,t)\, \dd W_t\,,\qquad Y_0^\eps = X_0\,.
\]
Then there exists $C > 0$ such that for $\eps \leq 1$:
\begin{equation}\label{toprove}
E\left[\sup\limits_{0\leq t \leq 1} \abs{X_t^\eps - Y_t^\eps}^2\right] \leq C \eps^2\,.
\end{equation}
\end{theorem}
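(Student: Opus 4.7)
The plan is to follow the classical Itô-calculus strategy: write $X_t^\eps - Y_t^\eps$ as the sum of a drift integral and a stochastic integral, bound each piece in $L^2$, and then combine via Doob's maximal inequality. Subtracting the integral forms of the two SDEs gives
\[
X_t^\eps - Y_t^\eps = \eps\int_0^t b(X_s^\eps,s)\,\dd s + \sqrt{\eps}\int_0^t \bigl[\sigma(X_s^\eps,s) - \sigma(X_0,s)\bigr]\,\dd W_s\,.
\]
Using $(a+b)^2\le 2a^2+2b^2$ and Cauchy--Schwarz on the drift integral, and Doob's $L^2$ maximal inequality on the martingale part, one obtains
\[
E\!\left[\sup_{t\le 1} \abs{X_t^\eps - Y_t^\eps}^2\right] \le 2\eps^2 \int_0^1 E\bigl[\abs{b(X_s^\eps,s)}^2\bigr]\,\dd s + 8\eps \int_0^1 E\bigl[\abs{\sigma(X_s^\eps,s) - \sigma(X_0,s)}^2\bigr]\,\dd s\,.
\]
The drift term is controlled by the growth hypothesis $\abs{b}^2\le K^2(1+\abs{X}^2)$, which yields an $O(\eps^2)$ bound provided $\sup_{\eps\le 1, s\le 1} E[\abs{X_s^\eps}^2]$ is finite. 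The diffusion term, after invoking the Lipschitz bound $\abs{\sigma(X_s^\eps,s)-\sigma(X_0,s)}^2 \le K^2 \abs{X_s^\eps - X_0}^2$, reduces to an $O(\eps)$ prefactor multiplied by $\int_0^1 E[\abs{X_s^\eps - X_0}^2]\,\dd s$; for the overall bound to be $O(\eps^2)$, we need the latter integrand to itself be $O(\eps)$.

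Hence the proof requires two auxiliary a priori estimates, which I would establish first. The first is the uniform second-moment bound $\sup_{\eps\le 1,\,t\le 1} E[\abs{X_t^\eps}^2] \le C_1$. This follows by applying Itô's isometry and Cauchy--Schwarz to the integral form of \cref{eq:sde}, invoking the growth conditions to produce $\eps^2(1+E[\abs{X_s^\eps}^2])$ and $\eps(1+E[\abs{X_s^\eps}^2])$ inside the time integrals, and then closing the inequality with Gronwall's lemma; the constant is independent of $\eps\le 1$ because $\eps\le\sqrt{\eps}\le 1$. The second is the ``small-displacement'' estimate $E[\sup_{s\le 1} \abs{X_s^\eps - X_0}^2] \le C_2\eps$, obtained by the same splitting as above but applied to $X_s^\eps - X_0$: the drift contributes $O(\eps^2)$ by growth, and the stochastic integral contributes $O(\eps)$ via Doob plus Itô isometry plus the first estimate. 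The weaker integrated version $\int_0^1 E[\abs{X_s^\eps - X_0}^2]\,\dd s = O(\eps)$ suffices for the main inequality.

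With these two estimates in hand, the rest is bookkeeping: substitute into the diffusion-term bound to get $O(\eps)\cdot O(\eps) = O(\eps^2)$, add the drift-term contribution, and take $C$ to be the resulting constant. The main obstacle is the circularity between the estimate on $X_t^\eps - Y_t^\eps$ and the a priori bound on $X_t^\eps - X_0$: one must be careful not to let the constants blow up with $\eps$, which is why tracking the precise powers of $\eps$ (and noting $\eps\le 1$ so that $\eps^2\le\eps$) matters, and why \emph{both} the drift contribution needs the $\eps^2$ prefactor and the diffusion contribution needs only $\eps$. A secondary subtlety is obtaining the $\sup$ inside the expectation rather than just the $L^2$ norm at time $1$; this is handled uniformly by applying Doob's maximal inequality to each martingale piece before invoking Gronwall, rather than afterwards.
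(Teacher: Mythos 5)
Your argument is correct, and it reaches the bound by a genuinely different organization of the same toolkit than the paper uses. The paper (following Blagoveshchenskii) applies Gr\"onwall directly to the scaled difference: it sets $\gamma(t,\eps)=(X_t^\eps-Y_t^\eps)/\eps$ and $\psi(t,\eps)=E\bigl[\sup_{s\le t}\abs{\gamma(s,\eps)}^2\bigr]$, splits the drift as $b(X_s^\eps,s)=\bigl[b(X_s^\eps,s)-b(Y_s^\eps,s)\bigr]+b(Y_s^\eps,s)$ so that the Lipschitz part feeds back into $\psi$, and controls all remaining terms through the explicit process $Z_t=(Y_t^\eps-X_0)/\sqrt\eps=\int_0^t\sigma(X_0,s)\,\dd W_s$, which is independent of $\eps$; a single Gr\"onwall inequality for $\psi$ then closes the estimate, with no a priori moment bound on $X^\eps$ itself beyond $E\abs{X_0}^2<\infty$. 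You instead decouple the problem: you first prove the uniform bound $\sup_{\eps\le1,t\le1}E\abs{X_t^\eps}^2\le C_1$ (Gr\"onwall on $X^\eps$ alone) and the displacement estimate $\int_0^1E\abs{X_s^\eps-X_0}^2\,\dd s=O(\eps)$, and then the main inequality follows by direct substitution, with no Gr\"onwall on the difference and no splitting of the drift. Your route is more modular and arguably more standard; the paper's route avoids the uniform-in-$\eps$ second-moment bound on $X^\eps$ by exploiting the explicit Gaussian structure of $Y^\eps$, and tracks the constants in the form used in the reference it adapts. Two routine points you should make explicit if you write this up: applying Gr\"onwall in your first estimate requires knowing that $t\mapsto E\abs{X_t^\eps}^2$ is finite (this comes from the existence theorem, cf.\ the paper's footnote on the finiteness of $\psi$), and applying Doob's maximal inequality requires the stochastic integrals to be genuine $L^2$-martingales, which your moment bound together with the growth and Lipschitz conditions guarantees.
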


\begin{proof}
For $t \in [0,1]$, let
\begin{align*}
\gamma(t,\eps) &\coloneqq \frac{X_t^\eps - Y_t^\eps}{\eps}\,, &
\psi(t,\eps) &\coloneqq E\left(\sup\limits_{0 \leq s \leq t}\abs{\gamma(s,\eps)}^2\right).
\end{align*}
In order to prove \cref{toprove}, it must be shown that that $\psi(1,\eps) \leq C$. By the definition of $\gamma$, 
\[
\eps \gamma(t,\eps) = \underbrace{\eps \int_0^t b(X_s^\eps,s)\,\dd s}_{\coloneqq a_1(t,\eps)} + \underbrace{\sqrt{\eps}\int_0^t \sigma(X_s^\eps,s) - \sigma(X_0,s)\,\dd W_s}_{\coloneqq a_2(t,\eps)}\,,
\]
and furthermore
\begin{align*}
a_1(t,\eps) &= \eps\left( \int_0^t b(X_s^\eps,s) - b(Y_s^\eps,s)\, \dd s + \int_0^t b(Y_s^\eps,s)\, \dd s\right), \\
\abs{a_1(t,\eps)} &\leq \eps K \left( \int_0^t \eps \abs{\gamma(s,\eps)}\, \dd s + \int_0^t \sqrt{1 +  \abs{Y_s^\eps}^2}\, \dd s\right).
\end{align*}
Now, as $t \leq 1$, Jensen's inequality (and $(a+b)^2 \leq 2(a^2 + b^2)$ twice) yields
\begin{align*}
\abs{a_1(t,\eps)}^2 &\leq 2\eps^2K^2\left(\eps^2\left(\int_0^t \abs{\gamma(s,\eps)} \dd s \right)^2 + \left(\int_0^t \sqrt{1 + \abs{Y_s^\eps}^2} \, \dd s\right)^2 \right)\\
&\leq 2 \eps^2 K^2 \left(\eps^2 \int_0^t \abs{\gamma(s,\eps)}^2 \,\dd s + 1 + 2\abs{X_0}^2 +  2\eps \int_0^t \abs{\frac{Y_s^\eps - X_0}{\sqrt \eps}}^2 \, \dd s\right).
\end{align*}
By monotonicity of the Lebesgue integral,
\begin{multline*}
\sup_{0\leq s \leq t}\abs{a_1(s,\eps)}^2 \leq 4 \eps^2 K^2 \left(\eps^2 \int_0^t \sup\limits_{0 \leq u \leq s }\abs{\gamma(u,\eps)}^2\,\dd s  + 1+\right.\\
\left. + \abs{X_0}^2 + \eps \int_0^t\sup\limits_{0 \leq u \leq s} \abs{\frac{Y_u^\eps - X_0}{\sqrt \eps}}^2 \,\dd s\right).
\end{multline*}
Consequently,
\begin{multline}\label{eq:a1bound}
E\left(\sup\limits_{0\leq s \leq t} \abs{a_1(s,\eps)}^2\right) \leq 4 K^2 \eps^2\left(\eps^2 \int_0^t \psi(s,\eps) \, \dd s  + 1+\right.\\
\left. +E(\abs{X_0}^2) 
 +  \eps E\left( \int_0^t \sup\limits_{0 \leq u \leq s} \abs{\frac{Y_u^\eps - X_0}{\sqrt \eps}}^2\,\dd s\right)\right).
\end{multline}
To deal with the $a_2$ term, we use the Itô isometry:
\begin{align*}
E(\abs{a_2(t,\eps)}^2) &= \eps \int_0^t E \left(\abs{\sigma(X_s^\eps,s) - \sigma(X_0,s)}^2\right)\dd s\\
&\leq \eps \int_0^t K^2E\left(\abs{X_s^\eps - X_0}^2\right) \dd s \\
&\leq K^2\eps\int_0^tE\left(\abs{Y_s^\eps - X_0 + \eps \gamma(s,\eps)}^2\right)\dd s\\
&\leq K^2\eps^2\int_0^tE \left(\abs{\frac{Y_s^\eps - X_0}{\sqrt \eps} + \sqrt \eps \gamma(s,\eps)}^2\right) \dd s  \\
&\leq 2 K^2\eps^2 \left(\int_0^t E\left(\abs{\frac{Y_s^\eps - X_0}{\sqrt \eps}}^2\right)\dd s + \eps \int_0^t E(\abs{\gamma(s,\eps)}^2) \dd s   \right)\\
&\leq 2 K^2\eps^2 \left(\int_0^t E\left(\abs{\frac{Y_s^\eps - X_0}{\sqrt \eps}}^2\right)\dd s + \eps\int_0^t \psi(s,\eps)\,\dd s\right).
\end{align*}
As $a_2(t,\eps)$ is a martingale, Doob's maximal inequality for $p=2$ shows that
\begin{align}
E\left(\sup_{0\leq s \leq t}\abs{a_2(s,\eps)}^2\right)  &\leq 4 E(\abs{a_2(t,\eps)}^2)\nonumber \\
&\leq 8 K^2\eps^2 \left(\int_0^t E\left(\abs{\frac{Y_s^\eps - X_0}{\sqrt \eps}}^2\right) \dd s + \eps\int_0^t \psi(s,\eps)\,\dd s\right).\label{eq:a2bound}
\end{align}
Let $Z_t^\eps = \frac{Y_t^\eps - X_0}{\sqrt{\eps}}$. One may readily verify that $Z_t^\eps$ satisfies
\[
Z_t^\eps = \int_0^t \sigma(X_0,t)\,\dd W_t\,,
\]
and hence in particular $Z_t^\eps$ does not depend on $\eps$, so we may write
$Z_t$ without the superscript $\eps$. Moreover, $Z_t$ is an $L^2$-martingale.
Thus, Doob's inequality ensures that $K_1 \coloneqq 4 E[Z_1^2]$ satisfies
$E\left(\sup\limits_{0 \leq t \leq 1}\abs{Z_t}^2\right) \leq K_1$.
Combining \cref{eq:a1bound} with \cref{eq:a2bound} and $(a+b)^2 \leq 2(a^2 + b^2)$, we obtain
\begin{align*}\textstyle
\eps^2 \psi(t,\eps) &\leq
8 K^2 \eps^2\left(\eps^2 \int_0^t \psi(s,\eps) \,\dd s + 1 + E(\abs{X_0}^2) +  \eps E\left( \int_0^t \sup\limits_{0 \leq u \leq s} \abs{Z_u}^2 \, \dd s\right) \right) \\
&\qquad\qquad\qquad+ 16 K^2\eps^2 \left(\int_0^t E(\abs{Z_t}^2)\, \dd s + \eps \int_0^t \psi(s,\eps)\,\dd s   \right) \\
&\leq 16 K^2 \eps^2 \left((\eps + \eps^2)\int_0^t \psi(s,\eps)\,\dd s + 1 + E(\abs{X_0}^2) + (1 + \eps)\int_0^t  E(\sup\limits_{0 \leq u \leq s} \abs{Z_u}^2 )\right).
\end{align*}
Writing $E(\abs{X_0}^2) = K_2 < \infty$ we see that for suitable $D > 0$,
\[
\psi(t,\eps) \leq D \int_0^t \psi(s,\eps)\dd s + 1 + 2K_1 + K_2\,,
\]
assuming $\eps \leq 1$. Grönwall's lemma\footnote{ 
    Observe that $t \mapsto \psi(t,\eps)$ is monotone (and hence measurable) and finite (cf.~\cite[Ch.~5, Cor.~1.2]{Friedman1975}).
    By the monotone convergence theorem, if $t_n \to t$ from below, then $\psi(t_n,\eps) \to \psi(t,\eps)$. In particular, the almost-everywhere (in $t$) bound from the integral form of Grönwall's lemma (see \cite[app B.2]{Evans2010}) holds everywhere.
    },
yields that
$\psi(1,\eps)$
is uniformly bounded, proving the claim.
\end{proof}

\section{Miscellaneous proofs}
\label{sec:misc}

\begin{lemma}
Let $(\Omega,\mathcal A,\pr)$ be some probability space and let $E[X]$ denote the expectation of some random variable $X$ on $\Omega$. For $\eps \in [0,1]$, let $A^\eps$ and $B^\eps$ be $(\R^n,\mathcal B)$-valued random variables with $E\left[\abs{A^\eps - B^\eps}^2\right] \leq C_0^2\eps^2$ for some $C_0 > 0$.  
Let $R\in\mathcal B$ and assume that $\mathcal P(A^\eps \in R^\delta) \leq C_1 \delta$ for sufficiently small $\delta > 0$ and some $C_1>0$.  Then: 
\[
\abs{E[\mathds{1}_R (A^\eps)] - E[\mathds{1}_R(B^\eps)]} = o(\sqrt \eps)\,,\quad \eps \to 0\,.
\]
\end{lemma}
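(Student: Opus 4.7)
The plan is to use a standard ``split-by-threshold'' argument: introduce an auxiliary parameter $\delta = \delta(\eps) > 0$ to be chosen at the end, use the $L^2$-closeness of $A^\eps$ and $B^\eps$ together with Chebyshev's inequality to control the tail event $\{\abs{A^\eps - B^\eps} > \delta\}$, and use the boundary-thinness hypothesis on $R_\delta$ to control what happens when $A^\eps$ and $B^\eps$ are close. The starting observation is purely geometric: if $\mathds{1}_R(A^\eps) \neq \mathds{1}_R(B^\eps)$, then one endpoint of the straight segment $[A^\eps, B^\eps]$ lies in $R$ and the other in $R^c$, and the point $t^\ast = \sup\set{t \in [0,1] : (1-t)A^\eps + tB^\eps \in R}$ provides a point of the segment in $\overline{R} \cap \overline{R^c} = \partial R$. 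Hence $d(A^\eps, \partial R) \leq \abs{A^\eps - B^\eps}$, so that pointwise on $\Omega$,
\[
\abs{\mathds{1}_R(A^\eps) - \mathds{1}_R(B^\eps)} \leq \mathds{1}_{\set{\abs{A^\eps - B^\eps} > \delta}} + \mathds{1}_{R_\delta}(A^\eps)\,.
\]

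Taking expectations and estimating the two resulting probabilities separately is the next step. Markov's inequality applied to $\abs{A^\eps - B^\eps}^2$ combined with the $L^2$ hypothesis yields
\[
\pr\!\left(\abs{A^\eps - B^\eps} > \delta\right) \leq \frac{E\left[\abs{A^\eps - B^\eps}^2\right]}{\delta^2} \leq \frac{C_0^2\eps^2}{\delta^2}\,,
\]
while the boundary-thinness hypothesis, valid for all sufficiently small $\delta$, directly gives $\pr(A^\eps \in R_\delta) \leq C_1 \delta$. Combining the two bounds,
\[
\abs{E[\mathds{1}_R(A^\eps)] - E[\mathds{1}_R(B^\eps)]} \leq \frac{C_0^2 \eps^2}{\delta^2} + C_1 \delta\,.
\]

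To finish, I would (loosely) optimize over $\delta$: any choice $\delta(\eps) = \eps^\alpha$ with exponent $\alpha \in (\tfrac12, \tfrac34)$---for example $\alpha = \tfrac23$---makes both error terms $O(\eps^{2/3})$, which is $o(\sqrt{\eps})$ as $\eps \to 0$. The only subtlety worth checking is that $\delta(\eps) \to 0$ so that the assumption on $R_\delta$ is eventually applicable; beyond that the argument is a textbook Chebyshev-plus-boundary-thinness estimate and does not present a genuine obstacle.
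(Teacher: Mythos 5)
Your proof is correct and follows essentially the same route as the paper: bound the probability that the two indicators disagree by $\pr(\abs{A^\eps-B^\eps}>\delta)+\pr(A^\eps\in R_\delta)$, apply Chebyshev/Markov to the first term, the boundary-thinness hypothesis to the second, and choose $\delta=\eps^\alpha$ with $\alpha\in(\tfrac12,\tfrac34)$ (the paper takes $\alpha=0.6$, you take $\alpha=\tfrac23$). The only nitpick is your supremum construction: as written it can return the endpoint $B^\eps$ when $B^\eps\in R$, which need not lie in $\partial R$, so one should parametrize the segment starting from the endpoint in $R$ (or simply invoke connectedness of the segment); this does not affect the estimate.
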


\begin{proof}
Note that for $\mass \in \Omega$, the condition
\[
(A^\eps(\mass) \in R \text{ and } B^\eps(\mass) \in R) \text{ or } (A^\eps(\mass) \notin R \text{ and } B^\eps(\mass) \notin R)
 \]
implies $\mathds{1}_R(A^\eps(\mass)) =  \mathds{1}_R(B^\eps(\mass))$.
We thus may see that for any $\delta > 0$
\begin{align*}
\abs{E[\mathds{1}_R (A^\eps)] - E[\mathds{1}_R(B^\eps)]}
&\leq \pr(A^\eps \in R \,\text{and}\, B^\eps \notin R) \\
&\quad + \pr(A^\eps \notin R \,\text{and}\, B^\eps \in R)\\
&\leq  2 \pr(A^\eps \in R^\delta \,\,\text{or}\,\, \abs{A^\eps - B^\eps} \geq \delta)  \\
&\leq 2\left(\pr(A^\eps \in R^\delta) + \pr(\abs{A^\eps - B^\eps} \geq \delta)\right).
\end{align*}
Now note that by the Markov inequality
\[
 \pr(\abs{A^\eps - B^\eps} \geq \delta) 
\leq \frac{E\left[\abs{A^\eps - B^\eps}^2\right]}{\delta^2}  
\leq \frac{C_0^2\eps^2}{\delta^2}.
\]
By assumption, we therefore get for sufficiently small $\delta$ that
\[
\abs{E(\mathds{1}_R (A^\eps)  - E(\mathds{1}_R(B^\eps)))} \leq 2\left(C_1 \delta +  C_0^2\eps^2/\delta^2\right).
\]
Choosing $\delta = \eps^{0.6}$ makes the first term $o(\sqrt{\eps})$. The second term is then proportional to $\eps^2/\delta^2 = \eps^{0.8} = o(\sqrt{\eps})$, which proves the claim.
\end{proof}

\section{Parabolic PDEs}
\label{sec:parabolic}

We collect here some useful technical facts about parabolic PDE of the form
\begin{equation}\label{eq:basicparabolic}
\partial_t u = \eps \Delta_t u\,, \quad u(0,\cdot) = u_0(\cdot)\,,
\end{equation}
where $\Delta_t u \coloneqq {\divergence_{\mass}} g_t^{-1}\dd u$ is a Laplace-like operator on a manifold $M$ for every $t \in [0,1]$, and $(g_t)_{t\in[0,1]}$ is a smoothly varying nonvanishing family of Riemannian metrics.
We write $P_t^\eps$ for the time-$t$ solution operator, i.e., $u(\cdot,t) = P_t^\eps u_0$. 
In the case that $M$ is a compact Riemannian manifold (possibly with Dirichlet boundary), we have summarized some well-known existence and uniqueness results for $u_0 \in L^2(M,\mass)$ in Appendix D of \cite{Karrasch2020a}.
If $M = \R^n$, we use in this document the assumption (\cref{assumption}) that 
there exists a bounded set $B$ (containing $S$ in its interior) so that both $g_t$ and $\mass$ are equal to the Euclidean metric and its volume form respectively outside of $B$ for all $t \in [0,1]$. 
Under this restriction, it is well-known that the time-$t$ solution operator $P_t^\eps$ is well-defined for $u_0 \in C_b(\R^n)$,
and a maximum principle for initial values in $C_0(\R^n)$ (continuous functions vanishing on infinity) holds.
The solution $u_\eps(t,x) \coloneqq P^\eps_t u_0$ satisfies \cref{eq:basicparabolic} everywhere on $(0,1] \times \R^n$ if $u_0$ has compact support.
Moreover, $P^\eps_t$ is of the form $(P^\eps_t u_0)(x) = \int_{\R^n} p_\eps(0,x,t,y)u_0(y) \mass$. We have here taken the somewhat unconventional step of using $\mass$ instead of the Lebesgue measure for the definition of the fundamental solution as this is the natural measure for problems like \cref{eq:basicparabolic},  recall that $\mass$ is equivalent to the $n$-dimensional Lebesgue measure $\ell^n$ under \cref{assumption} .
As a reference for these statements, see for instance \cite[Ch.~3--4]{Friedman1975}, \cite[Ch.~3]{Stroock2006} and \cite{Stewart1974}. 
\begin{remark}\label{remark:boundedConv}
The measure $p_\eps(0,x,t,\cdot)\mass$ is a probability measure, so we may extend $P^\eps_t$ to act on $u_0 \in L^\infty(\R^n)$. Moreoever, if $u_n \uparrow u$ pointwise everywhere for a sequence of functions $u_n \in L^\infty(\R^n)$, then the monotone convergence theorem yields that $P^\eps_t u_n \uparrow P^\eps u$.
\end{remark}
For positive initial data, the time-dependent heat equation preserves the integral with respect to $\mass$. This may be seen by adapting the proof of \cite[Sec.~6, Thm.~4.7]{Friedman1975}, but using the $L^2(\R^n,\mass)$ adjoint (as opposed to the $L^2(\R^n,\dd \ell^d)$ adjoint considered there) of $\mathcal M \coloneqq \eps \Delta_t - \partial_t$ which is given by $\mathcal M^* = \eps
\Delta_t + \partial_t$.
 The fundamental solution for $\mathcal M^*$ (adapted to $\mass$ instead of the Lebesgue measure as before), denoted by $p^*(x,t,y,\tau)$
 satisfies $\mathcal M^*p^*(\cdot,\cdot,y,\tau) = 0$ and (by mirroring the aformentioned proof) also $p^*(x,t,y,\tau) = p(y,\tau,x,t)$. As a consequence, 
\begin{align*}
 \int (P_t^\eps u_0)(x,t) \mass(x) &=  \int \int p(x,t,y,0)u_0(y) \mass(y)\mass(x) \\
&= \int \int p^*(y,0,x,t)u_0(y) \mass(y)\mass(x)  \\
&= \int \int p^*(y,0,x,t) \mass(x)u_0(y)\mass(y)  \\
&= \int u_0 \mass\,,
\end{align*}
as $p^*(y,0,\cdot,t)\mass$ is a probability measure.
Of course, all of the arguments above may also be applied to $\overline P^\eps$. In addition, here it is known that $\overline \Delta$ 
generates an analytic semigroup on $L^p(\R^n)$ for $p \in [1,\infty)$ \cite[Sect.~5.4, Thm.~5.6]{Tanabe1997},
and on $C^0(\R^n)$ \cite{Stewart1974}.

We will make use of the following approximation result.

\begin{proposition}[{\cite{Krol1991,Karrasch2020a}}]\label{thm:approximation}
If $M$ is compact (possibly with smooth homogeneous Dirichlet/Neumann boundary) and $u_0 \in C^\infty_c(\mathring{M})$ then 
\[
({P^\eps_t} u_0)(x) = u_0 + \eps\int_0^t \Delta_\tau u_0(x) \, \dd \tau + O(\eps^2) 
\]
uniformly in $(t,x)\in [0,1] \times M$ as $\eps \to 0$. 
\end{proposition}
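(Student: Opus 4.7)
The plan is to construct an explicit two-term ansatz in $\eps$, verify that it solves \cref{eq:basicparabolic} up to an explicit residual of order $\eps^2$, and then transfer this into an $L^\infty$ bound on $P_t^\eps u_0$ minus the ansatz by a parabolic maximum principle.

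Concretely, set
\[
v(t,x) \coloneqq u_0(x) + \eps \int_0^t \Delta_\tau u_0(x)\,\dd\tau\,.
\]
A direct computation gives $\partial_t v = \eps \Delta_t u_0$ and $\eps \Delta_t v = \eps \Delta_t u_0 + \eps^2 \Delta_t\!\int_0^t \Delta_\tau u_0\,\dd\tau$, so the error $e \coloneqq P_t^\eps u_0 - v$ satisfies
\[
\partial_t e = \eps\Delta_t e - \eps^2 F_t\,,\qquad e(0,\cdot)=0\,,
\]
where $F_t(x) \coloneqq \Delta_t\!\int_0^t \Delta_\tau u_0(x)\,\dd\tau$. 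Since $M$ is compact, the family $(g_t)_{t\in[0,1]}$ is smooth, and $u_0 \in C^\infty_c(\mathring M)$, standard chain-rule bounds yield a constant $C<\infty$ with $\sup_{t\in[0,1]}\|F_t\|_{L^\infty(M)}\leq C$. Moreover, because $u_0$ and all its derivatives vanish in a neighbourhood of $\partial M$, so do $\Delta_\tau u_0$ for every $\tau$ and hence $v$; thus $v$ automatically satisfies both homogeneous Dirichlet and homogeneous Neumann conditions, and $e$ inherits the same homogeneous boundary data as $P_t^\eps u_0$.

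The conclusion then follows from the weak parabolic maximum principle (cf.\ \cite[Thm.~A.3.1]{Jost2011}) applied to the comparison functions $\pm e - C\eps^2 t$: these satisfy $\partial_t w - \eps\Delta_t w \leq 0$, vanish at $t=0$, and respect the homogeneous boundary condition, so they remain non-positive on $[0,1]\times M$. This gives $\|e(t,\cdot)\|_{L^\infty(M)}\leq C\eps^2 t$, which is exactly the claimed uniform bound. The only place one must be careful is in ensuring that $\Delta_t$ has uniformly bounded coefficients and uniform ellipticity in $t$, so that a single $C$ controls $F_t$ and a single maximum-principle statement suffices on the whole of $[0,1]\times M$; this is immediate from the compactness of $M$ together with smoothness of $(g_t)_{t\in[0,1]}$ on the compact interval $[0,1]$, and I expect this to be the only bookkeeping step that requires attention.
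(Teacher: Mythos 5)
Your argument is correct in substance, but note that it is not the paper's route: the paper does not prove \cref{thm:approximation} internally at all---it imports the expansion from \cite{Krol1991} and \cite{Karrasch2020a} and merely remarks how to extend it to $M=\R^n$ under \cref{assumption}. Your ansatz $v(t,\cdot)=u_0+\eps\int_0^t\Delta_\tau u_0\,\dd\tau$ combined with a comparison-function maximum-principle estimate is a legitimate self-contained substitute, close in spirit to Krol's original averaging argument, and it buys brevity and independence from the cited references. Three points deserve attention. First, a harmless sign slip: the residual is $\partial_t e-\eps\Delta_t e=+\eps^2\,\Delta_t\!\int_0^t\Delta_\tau u_0\,\dd\tau$ rather than $-\eps^2F_t$; since you treat $\pm e$ symmetrically and only use $\abs{F_t}\leq C$, nothing changes. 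Second, the comparison argument presupposes that $P^\eps_t u_0$ is a classical solution up to the boundary of the space-time cylinder; this holds by parabolic regularity, with the compatibility conditions at $t=0$ automatic because $u_0$ vanishes near $\partial M$, but it should be stated. Third, and most substantively, the weak maximum principle you invoke (\cite[Thm.~A.3.1]{Jost2011}) controls a subsolution by its values on the \emph{parabolic boundary}; this settles the closed-manifold and Dirichlet cases, where $w_\pm=-C\eps^2t\leq 0$ on $[0,1]\times\partial M$, but in the Neumann case $w_\pm$ need not be nonpositive on the lateral boundary, so the cited statement does not apply directly. There you need the Neumann variant of the parabolic maximum principle---via the Hopf boundary-point lemma, using $\partial_\nu w_\pm=0$ on $[0,1]\times\partial M$---or, alternatively, a Duhamel/energy estimate exploiting that $\exp$ of the (time-dependent) Neumann generator is an $L^\infty$-contraction. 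With that adjustment the proof is complete and yields exactly the claimed $O(\eps^2)$ bound, uniformly in $(t,x)\in[0,1]\times M$.
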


By adapting the proof in \cite{Karrasch2020a}, this result can be extended to
the case that $M=\R^n$ assuming that the boundedness condition mentioned
earlier holds. In fact, the case $M=\R^n$ is close to the original setting of \cite{Krol1991}
on which the proof in \cite{Karrasch2020a} is based.

We conclude with the following useful property of $P^\eps_1$.

\begin{lemma}\label{lemma:kindofselfadjoint}
Let $S \subset \R^n$ be compact and measurable.  Then
\[
\Bra{P^\eps \mathds{1}_S, \mathds{1}_{S^c}}_0 = \Bra{\mathds{1}_S, P^\eps \mathds{1}_{S^c}}_0\,.
\]
Note that $P^\eps_1$ is generally not self-adjoint.
\end{lemma}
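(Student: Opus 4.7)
The plan is to show both sides reduce to the same expression, namely $\mass(S) - \int_S P^\eps\mathds{1}_S\,\mass$, by exploiting two structural properties of $P^\eps$ that are recorded in \cref{sec:parabolic}: mass preservation, $\int_{\R^n} P^\eps u_0\,\mass = \int_{\R^n} u_0\,\mass$, and preservation of constants, $P^\eps \mathds{1}_{\R^n} = \mathds{1}_{\R^n}$. The latter follows from the fact that $p_\eps(0,x,1,\cdot)\mass$ is a probability measure (equivalently, that $\Delta_t$ annihilates constants on $\R^n$), and both properties extend to the $L^\infty$ action of $P^\eps$ by the monotone convergence argument of \cref{remark:boundedConv}. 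Since $S$ is compact we have $\mathds{1}_S \in L^1 \cap L^\infty$, so $P^\eps \mathds{1}_S \in L^1(\R^n,\mass)$ by mass preservation, and the pairings with $\mathds{1}_{S^c}$ and $\mathds{1}_S$ below are all well-defined in the sense $\Bra{f,g}_0 = \int fg\,\mass$ for $fg \in L^1$.

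For the left-hand side, write $\mathds{1}_{S^c} = \mathds{1}_{\R^n} - \mathds{1}_S$ and split the integral to obtain
\[
\Bra{P^\eps \mathds{1}_S, \mathds{1}_{S^c}}_0 = \int_{\R^n} P^\eps \mathds{1}_S\,\mass - \int_S P^\eps \mathds{1}_S\,\mass = \mass(S) - \int_S P^\eps \mathds{1}_S\,\mass,
\]
where the first term is evaluated via mass preservation. For the right-hand side I would apply linearity of $P^\eps$ on $L^\infty$, together with $P^\eps \mathds{1}_{\R^n} = \mathds{1}_{\R^n}$, to write $P^\eps \mathds{1}_{S^c} = \mathds{1}_{\R^n} - P^\eps \mathds{1}_S$, giving
\[
\Bra{\mathds{1}_S, P^\eps \mathds{1}_{S^c}}_0 = \int_S \mathds{1}_{\R^n}\,\mass - \int_S P^\eps \mathds{1}_S\,\mass = \mass(S) - \int_S P^\eps \mathds{1}_S\,\mass,
\]
which matches the simplified left-hand side.

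There is no real analytical obstacle here; the content of the lemma is purely that failure of self-adjointness is invisible when one pairs with characteristic functions whose supports partition $\R^n$, provided the semigroup preserves both total mass and constants. The only point that requires a little care is the justification of the linear decomposition $P^\eps(\mathds{1}_{\R^n} - \mathds{1}_S) = P^\eps \mathds{1}_{\R^n} - P^\eps \mathds{1}_S$ on $L^\infty$, which is not in the classical $L^2$ semigroup framework; this is handled by the integral representation via the fundamental solution together with the bounded-convergence extension discussed in \cref{remark:boundedConv}.
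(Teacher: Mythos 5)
Your proof is correct and relies on exactly the two structural ingredients the paper uses in its proof of \cref{lemma:kindofselfadjoint}, namely preservation of constants, $P^\eps\mathds{1}_{\R^n}=\mathds{1}_{\R^n}$, and $L^1$-mass preservation of $P^\eps$ (together with linearity of the $L^\infty$-extension). You organize the computation as a reduction of both sides to the common quantity $\mass(S)-\int_S P^\eps\mathds{1}_S\,\mass$, whereas the paper algebraically rewrites the left-hand side directly into the right-hand side, but this is a cosmetic difference and the argument is in substance the same.
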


\begin{proof}
Using the properties of $P^\eps_1$ mentioned above, we compute
\begin{align*}
\Bra{P^\eps \mathds{1}_S, \mathds{1}_{S^c}}_0 &= \Bra{(P^\eps(\mathds{1}_{\R^n} - \mathds{1}_{S^c}))(\mathds{1}_{\R^n} -  \mathds{1}_{S}), \mathds{1}_{\R^n}}_0 \\
&= \Bra{(P^\eps \mathds{1}_{S^c})\mathds{1}_S + (\mathds{1}_{\R^n} - P^\eps \mathds{1}_{S^c}- \mathds{1}_S  )  ,\mathds{1}_{\R^n}}_0\\
&= \Bra{(P^\eps \mathds{1}_{S^c})\mathds{1}_S + ( P^\eps \mathds{1}_{S}- \mathds{1}_S  )  ,\mathds{1}_{\R^n}}_0\\
&= \Bra{P^\eps \mathds{1}_{S^c}, \mathds{1}_S}_0 + 0\,.\qedhere
\end{align*}
\end{proof}

% \bibliographystyle{plainurl}
% \bibliography{bibliography}

\end{document}